\newcommand{\Xc}{\mathcal{X}}
\newcommand{\Gc}{\mathcal{G}}
\newcommand{\Fc}{\mathcal{F}}
\newcommand{\Hc}{\mathcal{H}}
\newcommand{\smet}{\delta_s}
\newcommand{\ds}{\delta_S}
\newcommand{\inv}[1]{#1^{-1}}
\newcommand{\invp}[1]{\left(#1\right)^{-1}}
\newcommand{\nlsum}{\sum\nolimits}
\newcommand{\nlmin}{\min\nolimits}
\newcommand{\nlprod}{\prod\nolimits}
\newcommand{\reals}{\mathbb{R}}
\newcommand{\C}{\mathbb{C}}
\newcommand{\posdef}{\mathbb{P}}
\newcommand{\pd}{\posdef}
\renewcommand{\H}{\mathbb{H}}
\newcommand{\riem}{\delta_R}
\newcommand{\jsld}{S}
\newcommand{\thom}{\delta_T}
\newcommand{\hilb}{\delta_H}
\newcommand{\ssd}{\delta_S}
\newcommand{\gm}{{\sharp}}
\newcommand{\sdiv}{S-Divergence\xspace}
\newcommand{\majlw}{\prec_{w\log}}
\newcommand{\majl}{\prec_{\log}}
\newcommand{\majw}{\prec_w}
\newcommand{\ld}{\ell{}d}
\newcommand{\vn}{vn}
\newcommand{\da}{\downarrow}
\newcommand{\pfrac}[2]{\left(\tfrac{#1}{#2}\right)}
\newcommand{\ip}[2]{\langle {#1},\, {#2} \rangle}
\newcommand{\frob}[1]{\|{#1}\|_{\text{F}}}
\newcommand{\mynorm}[2]{\| {#1} \|_{#2}}
\newcommand{\norm}[1]{\mynorm{#1}{}}
\newcommand{\pnorm}[2]{\mynorm{#1}{#2}}
\newcommand{\norml}[1]{\mynorm{#1}{1}}
\newcommand{\infnorm}[1]{\mynorm{#1}{\infty}}
\newcommand{\enorm}[1]{\mynorm{#1}{}}
\newcommand{\half}{\tfrac{1}{2}}
\newcommand{\set}[1]{\left\{ {#1}\right\}}
\newcommand{\kron}{\otimes}
\DeclareMathOperator*{\argmin}{argmin}
\DeclareMathOperator{\trace}{tr}
\DeclareMathOperator{\Diag}{Diag}
\DeclareMathOperator{\Eig}{Eig}
\DeclareMathOperator{\mydet}{det}
\newcommand{\citep}{\cite}
\newcommand{\citet}{\cite}
\newtheorem{remark}[theorem]{Remark}
\newtheorem{example}[theorem]{Example}
\begin{document}

\title{Positive definite matrices and the S-divergence\thanks{A small fraction of an \emph{initial version of this work} was presented at the \emph{Advances in Neural Information Processing Systems (NIPS) 2012} conference}---see \citep{srapd}.}

\author{Suvrit Sra\thanks{Parts of this paper were written during my stay at Carnegie Mellon University; the initial version was prepared while I was at the Max Planck Institute for Intelligent Systems, T\"ubingen, Germany.}}  

\maketitle

\begin{abstract}
  Positive definite matrices abound in a dazzling variety of applications. This ubiquity can be in part attributed to their rich geometric structure: positive definite matrices form a self-dual convex cone whose strict interior is a Riemannian manifold. The manifold view is endowed with a ``natural'' distance function while the conic view is not. Nevertheless, drawing motivation from the conic view, we introduce the \emph{S-Divergence} as a ``natural'' distance-like function on the open cone of positive definite matrices. We motivate the S-divergence via a sequence of results that connect it to the Riemannian distance. In particular, we show that (a) this divergence is the square of a distance; and (b) that it has several geometric properties similar to those of the Riemannian distance, though without being computationally as demanding. The \sdiv is even more intriguing: although nonconvex, we can still compute matrix means and medians using it to global optimality. We complement our results with some numerical experiments illustrating our theorems and our optimization algorithm for computing matrix medians. 
\end{abstract}

\begin{keywords}
  Bregman matrix divergence; Log Determinant; Stein Divergence; Jensen-Bregman divergence; matrix geometric mean; matrix median; nonpositive curvature
\end{keywords}

\section{Introduction}
Hermitian positive definite (HPD) matrices are a noncommutative generalization of positive reals. They abound in a multitude of applications and exhibit attractive geometric properties---e.g., they form a differentiable Riemannian (also  Finslerian) manifold~\citep{bhatia07,hiai} that is a well-studied example of a manifold of nonpositive curvature \citep[Ch.10]{bridson}. HPD matrices possess even more structure: (i) they embody a canonical higher-rank symmetric space \citep{terras}; and (ii) their closure forms a closed, self-dual convex cone.

The convex conic view enjoys great importance in convex optimization \citep{ipbook,lemco,neTo02} and in nonlinear Perron-Frobenius theory~\citep{lemNuss12}; symmetric spaces are important in algebra, analysis \citep{terras,helgason,leeLim}, and optimization \citep{ipbook,sdps}; while the manifold view (Riemannian or Finslerian) plays diverse roles---see~\citep[Ch.6]{bhatia07} and \cite{nieBha13}.

The manifold view is equipped with a with a ``natural'' distance function while the conic view is not. Nevertheless, drawing motivation from the convex conic view, we introduce the \emph{S-Divergence} as a ``natural'' distance-like function on the open cone of positive definite matrices. Indeed, we prove a sequence of results connecting the \sdiv to the Riemannian distance. Most importantly, we show that (a) this divergence is the square of a distance; and (b) that it has several geometric properties in common with the Riemannian distance, without being numerically as demanding. This builds an informal link between the manifold and conic views of HPD matrices. 

\subsection{Background and notation}
We begin by fixing notation. The letter $\Hc$ denotes some Hilbert space, usually just $\C^n$. The inner product between two vectors $x$ and $y$ in $\Hc$ is $\ip{x}{y} := x^*y$ ($x^*$ denotes `conjugate transpose'). The set of $n\times n$ Hermitian matrices is denoted as $\H_n$. A matrix $A \in \H_n$ is called \emph{positive definite} if
\begin{equation}
  \label{eq.pd}
  \ip{x}{Ax} > 0\quad\text{for all}\quad x \neq 0,\qquad\text{also written as}\quad A > 0.
\end{equation}
The set of all positive definite (henceforth \emph{positive}) matrices is denoted by $\pd_n$. We say $A$ is \emph{positive semidefinite} if $\ip{x}{Ax} \ge 0$ for all $x$; denoted $A \ge 0$. The inequality $A \ge B$ is the usual L\"owner order and means $A-B \ge 0$. The \emph{Frobenius norm} of a matrix $X \in \C^{m\times n}$ is defined as $\frob{X} = \sqrt{\trace(X^*X)}$; and $\enorm{X}$ denotes the operator 2-norm. Let $f$ be an analytic function on $\C$; for a matrix $A$ with eigendecomposition $A=U\Lambda U^*$, $f(A)$ equals $Uf(\Lambda)U^*$ with $f(\Lambda)=\Diag[f(\lambda_1),\ldots,f(\lambda_n)]$.

The set $\pd_n$ is a well-studied differentiable Riemannian manifold, with the Riemannian metric given by the differential form $ds = \frob{A^{-1/2}dAA^{-1/2}}$. This metric induces the \emph{Riemannian distance}~(see e.g.,~\cite[Ch.~6]{bhatia07}):
\begin{equation}
  \label{eq.riem}
  \riem(X,Y) := \frob{\log(Y^{-1/2}XY^{-1/2})}\quad\text{for}\quad X, Y > 0,
\end{equation}
and where $\log(\cdot)$ denotes the matrix logarithm. 

A counterpart to the distance~\eqref{eq.riem} was formally introduced in~\citep{srapd} under the name \emph{\sdiv}\footnote{It is a divergence because although nonnegative,
  definite, and symmetric, it is \emph{not} a metric.}; this divergence is defined as
\begin{equation}
  \label{eq.ss}
  \ds^2(X,Y) := \log\det\left(\frac{X+Y}{2}\right) - \frac{1}{2}\log\det(XY)\quad\text{for}\quad X, Y > 0.
\end{equation}
Our definition above already writes $\ds^2$ in anticipation of Theorem~\ref{thm.metric} that shows $\ds$ to be a metric. This paper suggests \sdiv as an alternative to~\eqref{eq.riem}, and studies several of its properties that may also be of independent interest. The simplicity of~\eqref{eq.ss} is one of the key reasons for using it as an alternative to~\eqref{eq.riem}: it is cheaper to compute, as is its derivative, and certain basic algorithms involving it run much faster than corresponding ones that use $\riem$~\citep{srapd}.

This line of thought actually originates in~\cite{iccv11,chSra12}, where for an image search task based on ``nearest neighbors,'' $\ds^2$ is used to measure nearness instead of $\riem$, and is shown to yield large speedups without blighting the quality of search results. Although exact details of this image search are outside the scope of this paper, let us highlight below the two speedups that were crucial to~\cite{iccv11,chSra12}. 

The first speedup is shown in the left panel of Fig.~\ref{fig.one}, which compares times taken to compute $\ds^2$ and $\riem$. For computing the latter, we used the \texttt{dist.m} function in the Matrix Means Toolbox (MMT)\footnote{Downloaded from \textit{http://bezout.dm.unipi.it/software/mmtoolbox/}}. The second, more dramatic speedup in shown in the right panel which shows time taken to compute the matrix means
\begin{equation*}
  GM_{\ld} := \argmin_{X>0} \nlsum_{i=1}^m \ds^2(X,A_i),\quad\text{and}\quad
  GM := \argmin_{X>0} \nlsum_{i=1}^m \riem^2(X,A_i),
\end{equation*}
where $(A_1,\ldots,A_m)$ are HPD matrices. For details on $GM_{\ld}$ see Section~\ref{sec.mtxmeans}; the geometric mean $GM$ is also known as the ``Karcher mean'', and was computed using the MMT via the \texttt{rich.m} script which implements a state-of-the-art method~\cite{bruno,jeuVaVa}.

\begin{figure}[h]
  \centering
  \includegraphics[scale=0.27]{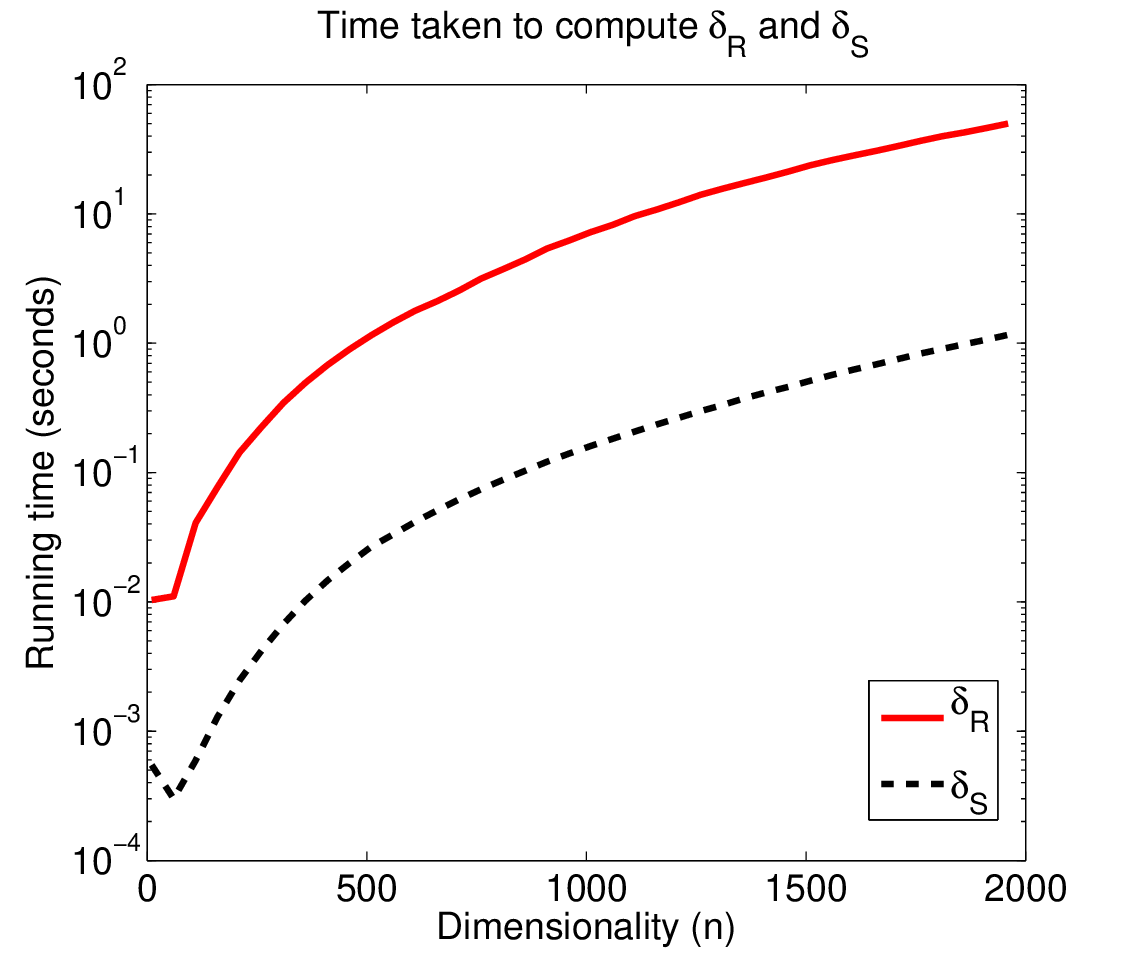} 
  \includegraphics[scale=0.27]{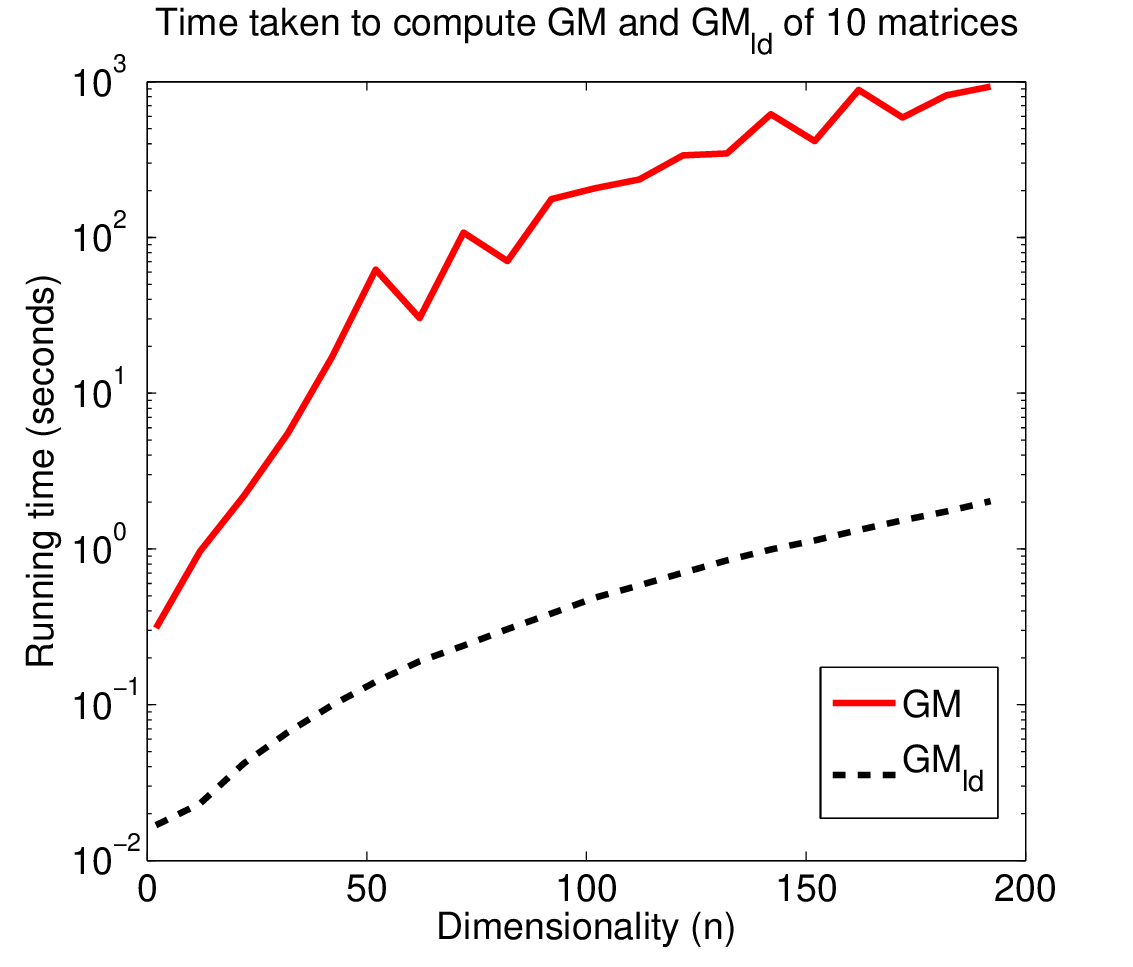}
  \caption{\textbf{Left:} Time taken to compute $\riem$ and $\ds^2$. For $\riem$, we used  MMT's Schur factorization based implementation. The results are averaged over 10 runs to reduce variance. The plot indicates that $\ds^2$ can be up to 50 times faster than $\riem$. \textbf{Right:} Time taken to compute $GM$ and $GM_{\ld}$. The former was computed using the method of~\cite{bruno}, while the latter was obtained via a fixed-point iteration. The differences are huge: $GM_{\ld}$ can be obtained up to 1000 times faster! Even the slew of methods surveyed in~\citep{jeuVaVa} show similar or worse runtimes than those cited for $\riem$ above.}
  \label{fig.one}
\end{figure}

We mention here that other alternatives to $\riem$ are also possible, for instance the popular ``log-Euclidean'' distance~\cite{lerm}, given by
\begin{equation}
  \label{eq.29}
  \delta_{le}(X,Y)=\frob{\log X - \log Y}.
\end{equation}
Notice that to compute $\delta_{le}$ we require two eigenvector decompositions; this makes it more expensive than $\ds^2$ which requires only 3 Cholesky factorizations. Even though the matrix mean under $\delta_{le}^2$ can be computed in closed form, its dependency on matrix logarithms and exponentials can make it slower than $GM_{\ld}$. However, much more importantly, for the applications in~\cite{iccv11,chSra12}, $\delta_{le}$ and other  alternatives to $\riem$ proved to be substantially less competitive than~\eqref{eq.ss}, so we limit our focus to $\ds^2$; for more extensive empirical comparisons with other distances, we refer the reader to~\cite{iccv11,chSra12}.

While our paper was under review (in 2011), we became aware of a concurrent paper of Chebbi and Moakher (CM)~\cite{chebbi}, who consider a one parameter family of divergences that generalize~\eqref{eq.ss}. Our work differs from CM in the following aspects:
\begin{list}{$\bullet$}{\leftmargin=3em}
  \setlength{\itemsep}{0pt}
\item CM prove $\ds$ to be a distance only for commuting matrices. As per Remark~\ref{rmk.commute},  the commuting case essentially reduces to the scalar case. The noncommuting case is much harder, and was conjectured to be true  in~\cite{chebbi}. We propose and solve the general noncommuting case, independent of CM.
\item We establish several theorems connecting $\ds^2$ to the Riemannian distance $\riem$. These connections have not been made either by CM or \emph{elsewhere}.
\item A question closely related to metricity of $\ds$ is whether the matrix
  \begin{equation*}
    [\det(X_i+X_j)^{-\beta}]_{i,j=1}^m,\qquad X_1,\ldots,X_m \in \pd_n,
  \end{equation*}
  is positive semidefinite for every integer $m \ge 1$ and every scalar $\beta \ge 0$. CM considered special cases of this question. We provide a complete characterization of $\beta$ necessary and sufficient for the above matrix to be semidefinite. 
\item CM analyze the ``matrix-mean'' $\min_{X>0} f(X) :=\sum_i \ds^2(X,A_i)$, whose solution they obtain by solving $\nabla f(X)=0$. CM's results essentially imply global optimality\footnote{We thank a referee for alerting us to this fact, which ultimately follows from CM's uniqueness theorem and the observation that the cost function goes to $+\infty$ for both $X \to 0$ and $X\to \infty$ (see Sec.~\ref{sec.mtxmeans} for more details).}; we provide two different proofs of this fact. One of our proofs is based of establishing \emph{geodesic convexity} of the \sdiv---a result that is also interesting because in our previous attempts~\citep{srapd,ssdiv} we oversaw this property. In fact, we show (Theorem~\ref{thm.jointgc}) that $\ds^2$ is jointly geodesically convex.
\end{list}

\vspace*{6pt}
\noindent\textbf{Other contributions.}
The present paper substantially extends our initial work~\citep{srapd}; we outline below the key differences from~\citep{srapd}.
\begin{itemize}
  \setlength{\itemsep}{0pt}
\item Due to lack of space proofs of the lemmas supporting Theorem~\ref{thm.metric} did not appear in~\citep{srapd}. In particular, proofs of Corollaries~\ref{corr.mink}, \ref{cor.eigs} and Theorems~\ref{thm.diag},  \ref{thm.det} are absent from~\citep{srapd} (these results are not difficult though).
\item None of our results on similarities between the Riemannian distance $\riem$ and $\ds$ are present in~\citep{srapd}. Although some of these results are mentioned in a summary table in~\citep{srapd}, the full theorem statements as well as their proofs are absent. The concerned results are: Theorems~\ref{thm.gm}, \ref{thm.contract}, \ref{thm.contract3}, \ref{thm.cancel}, \ref{thm.tucontract}, and~\ref{thm.contract2}. 
\item This paper uncovers a new result (previously unknown): \emph{$\ds^2$ is jointly geodesically convex}---Prop.~\ref{prop.mixedmean} and Theorem ~\ref{thm.jointgc} establish this remarkable fact.
\item This paper proves several new ``conic'' contraction results for $\ds$ and $\riem$. The appurtenant results are: Proposition~\ref{prop.logdet}, Corollary~\ref{cor.ratio}, Theorem~\ref{thm.compr}, Corollary~\ref{cor.compr.tensor}, Theorem~\ref{thm.geig}, and Corollary~\ref{corr.riem.compress}.
\item This paper proves bi-Lipschitz-like inequalities for $\riem$ and $\ds$ (Theorem~\ref{thm.bnds}).
\item Finally, this paper studies the \emph{weighted matrix-medians} problem: $$\min_{X>0}\quad \nlsum_{i=1}^mw_i\ds(X,A_i),\qquad A_i \in \pd_n,\ \text{for}\ 1 \le i \le m.$$ This problem was also partially studied by~\citet{chaCheMoa13}, who presented an iterative method that was erroneously claimed to be a fixed-point iteration in the Thompson metric. We present a counterexample to illustrate this error, and rectify it by presenting different analysis that ensures convergence (see \S\ref{sec.median}).

\end{itemize}

\section{The \sdiv} We proceed now to formally introduce the \sdiv. We follow the viewpoint of Bregman divergences. Consider, thus, a differentiable strictly convex function $f : \reals \to \reals$; then, $f(x) \ge f(y) + f'(y)(x-y)$, with equality if and only if $x=y$. The difference between the two sides of this inequality defines the \emph{Bregman Divergence}\footnote{Bregman divergences over scalars and vectors have been well-studied; see e.g.,~\cite{censor97,banerjee04b}. They are called divergences because they are not distances (though they often behave like squared distances, in a sense that can be made precise for certain choices of $f$~\citep{chen08}).} 
\begin{equation}
  \label{eq.breg.scalar}
  D_f(x,y) := f(x) - f(y) - f'(y)(x-y).
\end{equation}
The scalar divergence~\eqref{eq.breg.scalar} can be extended to Hermitian matrices. 
\begin{proposition}
  \label{prop.breg}
  Let $f$ be differentiable and strictly convex on $\reals$; let $X, Y \in \H_n$ be arbitrary. Then, we have the \emph{matrix Bregman Divergence}:
  \begin{equation}
    \label{eq.breg}
    D_f(X,Y) := \trace f(X) -\trace f(Y) - \trace\bigl(f'(Y)(X-Y)\bigr) \ge 0.
  \end{equation}
\end{proposition}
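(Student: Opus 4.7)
The plan is to reduce the matrix inequality to the scalar Bregman inequality by diagonalizing $X$ and $Y$ simultaneously in a convenient way. Apply the spectral theorem to write $X = U\Lambda U^*$ and $Y = VMV^*$, with $\Lambda = \Diag(\lambda_1,\dots,\lambda_n)$ and $M = \Diag(\mu_1,\dots,\mu_n)$, and set $W := V^*U$, which is unitary.

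With this decomposition, $\trace f(X) = \sum_j f(\lambda_j)$ and $\trace f(Y) = \sum_i f(\mu_i)$. For the cross term, I would compute
\[
\trace\bigl(f'(Y) X\bigr) = \trace\bigl(f'(M) W\Lambda W^*\bigr) = \nlsum_{i,j}|w_{ij}|^2 f'(\mu_i)\lambda_j,
\]
and similarly $\trace(f'(Y)Y) = \sum_i f'(\mu_i)\mu_i$. The key book-keeping step is to use that $W$ is doubly stochastic in the sense $\sum_j |w_{ij}|^2 = \sum_i |w_{ij}|^2 = 1$, which lets me rewrite each of the single sums as a double sum weighted by $|w_{ij}|^2$. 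Collecting everything yields
\[
D_f(X,Y) = \nlsum_{i,j} |w_{ij}|^2 \bigl[f(\lambda_j) - f(\mu_i) - f'(\mu_i)(\lambda_j - \mu_i)\bigr].
\]

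Each bracket is exactly the scalar Bregman divergence $D_f(\lambda_j,\mu_i)$ of the strictly convex $f$, hence nonnegative by~\eqref{eq.breg.scalar}. Since $|w_{ij}|^2 \ge 0$, the double sum is nonnegative, proving the inequality.

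There is no real obstacle; the only thing to be careful about is the non-commutativity, which is precisely what the change of basis by $W$ handles (if $X,Y$ commute, $W$ can be chosen to be a permutation and the double sum collapses to a diagonal sum $\sum_i D_f(\lambda_{\pi(i)},\mu_i)$). If desired, one can also note strict positivity: $D_f(X,Y) = 0$ forces $D_f(\lambda_j,\mu_i) = 0$ whenever $w_{ij}\ne 0$, which together with strict convexity of $f$ and unitarity of $W$ gives $X = Y$.
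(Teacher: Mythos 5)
Your argument is correct, and it is worth noting that the paper itself offers no proof of Proposition~\ref{prop.breg}: it simply asserts the inequality and defers to the reference \cite{tropp}. What you have written is the classical self-contained argument (essentially Klein's inequality): diagonalize $X=U\Lambda U^*$ and $Y=VMV^*$, pass to the unitary $W=V^*U$, and use that $[|w_{ij}|^2]$ is doubly stochastic to express $D_f(X,Y)$ as $\nlsum_{i,j}|w_{ij}|^2\,D_f(\lambda_j,\mu_i)$ with each summand a scalar Bregman divergence. All the trace computations check out — in particular $\trace(f'(Y)X)=\nlsum_{i,j}|w_{ij}|^2f'(\mu_i)\lambda_j$ and the row/column normalizations $\nlsum_j|w_{ij}|^2=\nlsum_i|w_{ij}|^2=1$ are exactly what is needed to fold the single sums into the double sum. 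A bonus of your route is that it also delivers the definiteness claim the paper states immediately after the proposition without justification: $D_f(X,Y)=0$ forces $\lambda_j=\mu_i$ whenever $w_{ij}\neq 0$ by strict convexity, hence $w_{ij}\lambda_j=w_{ij}\mu_i$ for all $i,j$, so $W\Lambda W^*=M$ and $X=Y$. The only cosmetic point is that the paper defines $f(A)$ for analytic $f$, whereas here $f$ is merely differentiable; your proof sidesteps this by only ever using $\trace f(X)=\nlsum_j f(\lambda_j)$, which is the right reading of the statement.
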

By construction $D_f$ is nonnegative, strictly convex in $X$, and zero if and only if $X=Y$. It is typically asymmetric, and may be viewed as a measure of  dissimilarity.

\begin{example}
  \label{eg.breg}
  Let $f(x) = \half x^2$. Then, for $X \in \H_n$, $\trace f(X)=\half\trace(X^2)$,  with which~\eqref{eq.breg} yields the squared \emph{Frobenius norm}
  \begin{equation*}
    D_f(X,Y) = \half\frob{X-Y}^2.
  \end{equation*}
  If $f(x)= x \log x - x$ on $(0,\infty)$, then $\trace f(X)=\half\trace(X\log X - X)$, and~\eqref{eq.breg} yields the (unnormalized) \emph{von Neumann Divergence} of quantum information theory~\cite{nielsen}:
  \begin{equation*}
    D_{\vn}(X,Y) = \trace(X\log X - X\log Y-X+Y),\qquad X, Y \in \pd_n.
  \end{equation*}
  For $f(x)=-\log x$ on $(0,\infty)$, $\trace f(X)=-\log\det(X)$, and we obtain the divergence
  \begin{equation*}
    D_{\ld}(X,Y) = \trace(\inv{Y}(X-Y)) - \log\det(X\inv{Y}),\qquad X, Y \in \pd_n,
  \end{equation*}
  which is known as the \emph{LogDet Divergence}~\cite{kulis}, or more classically as \emph{Stein's loss}~\cite{stein56}.
\end{example}

The divergence $D_{\ld}$ is of key importance to our paper, so we mention some additional noteworthy contexts where it occurs:
\begin{inparaenum}[(i)]
\item information theory~\cite{cover}, as the relative entropy between two multivariate gaussians with same mean;
\item optimization, when deriving the famous Broyden-Fletcher-Goldfarb-Shanno (BFGS) updates~\cite{nocedal};
\item matrix divergence theory \cite{bauschke,tropp,nieBha13};
\item kernel learning~\cite{kulis}.
\end{inparaenum}

Despite the broad applicability of Bregman divergences, their asymmetry is sometimes undesirable. This drawback has leads us to consider symmetric divergences, among which the most popular is the ``\emph{Jensen-Bregman}'' divergence\footnote{This symmetrization has been largely studied only for divergences over scalars or vectors.}
\begin{equation}
  \label{eq.js}
  S_f(X,Y) := D_f(X, \tfrac{X+Y}{2}) + D_f(\tfrac{X+Y}{2}, Y).
\end{equation}
This divergence has two attractive and perhaps more useful representations:
\begin{align}
  \label{eq.js1}
  S_f(X,Y) &\quad=\quad\half\bigl(\trace f(X) + \trace f(Y)\bigr) - \trace f\bigl(\tfrac{X+Y}{2}\bigr),\\
  \label{eq.js2}
  S_f(X,Y) &\quad=\quad \nlmin_Z\quad D_f(X,Z) + D_f(Y,Z).
\end{align}

Compare~\eqref{eq.breg} with~(\ref{eq.js1}): both formulas define divergence as  departure from linearity; the former uses derivatives, while the latter is stated using midpoint convexity. Representation~\eqref{eq.js1} has an advantage over~\eqref{eq.breg}, \eqref{eq.js}, and~\eqref{eq.js2}, in that it does not need to assume differentiability of $f$.

The reader must have realized by now that the \sdiv~\eqref{eq.ss} is nothing but the symmetrized divergence~\eqref{eq.js} generated by $f(x)=-\log x$. Alternatively,  the \sdiv may be essentially viewed as the Jensen-Bregman divergence between two multivariate gaussians~\cite{cover}, or as the Bhattacharya distance between them~\cite{bhatt}.

Let us now list a few basic properties of $S$.
\begin{proposition}
  \label{prop.basic}
  Let $\lambda(X)$ be the vector of eigenvalues of $X$, and $\Eig(X)$ be a diagonal  matrix with $\lambda(X)$ as its diagonal. Let $A, B, C \in \pd_n$. Then,
  \begin{enumerate}[(i)]
    \setlength{\itemsep}{1pt}
  \item $\ds(I, A) = \ds(I, \Eig(A))$;
  \item $\ds(A,B)=\ds(PAQ, PBQ)$, where $P$, $Q \in \text{GL}(n,\C)$;
  \item $\ds(A,B) =\ds(A^{-1},B^{-1})$; 
  \item $\ds^2(A\kron B, A \kron C) = n\ds^2(B,C)$; and
  \item $\ds^2(A\oplus B, C \oplus D) = \ds^2(A,C)+\ds^2(B,D)$.
  \end{enumerate}
\end{proposition}
\begin{proof}
  (i) follows from the equality $\det(I+A) = \prod_i\lambda_i(I+A) = \prod_i(1+\lambda_i(A))$.\\
  (ii) follows upon observing that
  \begin{equation*}
    \frac{\det(PAQ+PBQ)}{[\det(PAQ)]^{1/2}[\det(PBQ)]^{1/2}} =
    \frac{\det(P)\cdot\det(A+B) \cdot\det(Q)}
    {\det(P) \cdot [\det(A)]^{1/2}[\det(B)]^{1/2}\cdot\det(Q)}.
  \end{equation*}
  (iii) follows upon noting 
  \begin{equation*}
    \frac{\det(A^{-1}+B^{-1})}{[\det(A^{-1})]^{1/2}[\det(B)^{-1}]^{1/2}} = 
    \frac{\det(A)\cdot\det(A^{-1}+B^{-1}) \cdot\det(B)}
    {[\det(A)]^{1/2}[\det(B)]^{1/2}}.
  \end{equation*}
  (iv) follows as $A \kron B + A \kron C = A \kron(B+C)$, and $\det(A\kron B)=\det(A)^n\det(B)^n$.\\
  (v) is trivial since $\det(A\oplus B)=\det(A)\det(B)$.\qed
\end{proof}
The most useful corollary to Prop.~\ref{prop.basic} is \emph{congruence invariance} of $\ds$.
\begin{corollary}
  \label{corr.cong}
  Let $A, B > 0$, and let $X$ be any invertible matrix. Then,
  \begin{equation*}
    \ds(X^*AX,X^*BX) =\ds(A,B).
  \end{equation*}
\end{corollary}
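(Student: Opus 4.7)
The plan is to observe that this corollary is essentially a direct specialization of part (ii) of Lemma~\ref{lem.basic}, which already established $\jsld(A,B) = \jsld(PAQ, PBQ)$ for arbitrary $P, Q \in \text{GL}(n,\cc)$. The only point worth checking is that the choice $P = X^*$, $Q = X$ is admissible.

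First I would note that if $X$ is invertible, then so is $X^*$ (with inverse $(X^{-1})^*$), so both $X^*$ and $X$ lie in $\text{GL}(n,\cc)$. Second, I would invoke Lemma~\ref{lem.basic}(ii) with $P := X^*$ and $Q := X$, which immediately gives
\begin{equation*}
\jsld(X^*AX, X^*BX) = \jsld(A, B).
\end{equation*}

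There is essentially no obstacle here; the corollary is a one-line consequence of the already-proved bilateral invariance under $\text{GL}(n,\cc) \times \text{GL}(n,\cc)$. The only reason it merits a separate statement is that congruence invariance (the symmetric $X \mapsto X^*AX$ action) is the form in which the property is most commonly used later—mirroring the familiar congruence invariance of the Riemannian metric $\riem$ on $\pd_n$—so it is convenient to record it explicitly.
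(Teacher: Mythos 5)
Your proposal is correct and is exactly the route the paper takes: Corollary~\ref{corr.cong} is presented there with no separate proof, as an immediate specialization of Lemma~\ref{lem.basic}(ii) to $P=X^*$, $Q=X$. Your added check that $X^*\in\text{GL}(n,\cc)$ (and, implicitly, that $X^*AX$ remains positive definite so that $\jsld$ is defined) is a sensible bit of care, since the lemma's statement for arbitrary $P,Q$ glosses over when $PAQ$ is actually Hermitian positive definite.
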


The next result reaffirms that $\ds^2(\cdot,\cdot)$ is a divergence, while showing that it enjoys some limited convexity and concavity.
\begin{proposition}
  \label{prop.scvx}
  Let $A, B > 0$. Then, (i) $\ds^2(A,B) \ge 0$ with equality
  if and only if $A=B$; (ii) for fixed $B$, $\ds^2(A,B)$ is convex
  in $A$ for $A \le (1+\sqrt{2})B$, while for $A \ge (1+\sqrt{2})B$, it is concave.
\end{proposition}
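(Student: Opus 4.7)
For part (i), the plan is to rewrite
\begin{equation*}
  S(A,B) \;=\; \log\det\tfrac{A+B}{2} - \tfrac{1}{2}\log\det A - \tfrac{1}{2}\log\det B,
\end{equation*}
and invoke the strict concavity of $\log\det$ on $\pd_n$, which yields $\log\det\tfrac{A+B}{2}\ge\tfrac{1}{2}\log\det A+\tfrac{1}{2}\log\det B$ with equality if and only if $A=B$. (Alternatively, this follows from Proposition~\ref{prop.breg} applied to $f(x)=-\log x$ via the Jensen--Shannon representation \eqref{eq.js1}.)

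For part (ii), the plan is to compute the second Fréchet derivative of $A\mapsto S(A,B)$ and identify the sign condition on $A,B$. Using the standard identity $D^2\log\det(X)[H,H]=-\trace(X^{-1}HX^{-1}H)$ together with the chain rule applied to $X(A)=\tfrac{A+B}{2}$, one obtains
\begin{equation*}
  D^2 S(A,B)[H,H] \;=\; \tfrac{1}{2}\trace(A^{-1}HA^{-1}H) - \trace\bigl((A+B)^{-1}H(A+B)^{-1}H\bigr),
\end{equation*}
for every Hermitian direction $H$.

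To turn this into a clean inequality, the key move is a congruence reduction. Setting $H=A^{1/2}GA^{1/2}$ and $C=A^{-1/2}BA^{-1/2}$, cyclicity of trace gives
\begin{equation*}
  D^2 S(A,B)[H,H] \;=\; \tfrac{1}{2}\trace(G^2) - \trace\bigl((I+C)^{-1}G(I+C)^{-1}G\bigr).
\end{equation*}
Diagonalizing $C=U\Lambda U^*$ with $\Lambda=\Diag(\lambda_1,\ldots,\lambda_n)$ and writing $G'=U^*GU=(g'_{ij})$, the right-hand side becomes
\begin{equation*}
  \sum_{i,j}\left(\tfrac{1}{2} - \tfrac{1}{(1+\lambda_i)(1+\lambda_j)}\right)|g'_{ij}|^2.
\end{equation*}
This quadratic form is nonnegative for every Hermitian $G'$ iff $(1+\lambda_i)(1+\lambda_j)\ge 2$ for all $i,j$, which since both indices may be made equal to the minimizer reduces to $(1+\lambda_{\min}(C))^2\ge 2$, i.e.\ $C\ge(\sqrt{2}-1)I$. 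Translating back by congruence, this is $B\ge(\sqrt{2}-1)A$, equivalently $A\le(1+\sqrt{2})B$. The concave regime is symmetric: requiring $(1+\lambda_{\max}(C))^2\le 2$ gives $C\le(\sqrt{2}-1)I$, i.e.\ $A\ge(1+\sqrt{2})B$.

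I expect the main obstacle to be bookkeeping rather than substance: first, getting the chain-rule factors of $1/2$ right in the Hessian (the $(H/2)$ contributions collapse to a clean $-\trace((A+B)^{-1}H(A+B)^{-1}H)$), and second, choosing the right congruence $H=A^{1/2}GA^{1/2}$ (as opposed to, say, a $(A+B)^{1/2}$-based one) so that one side of the Hessian becomes a plain $\tfrac12\trace(G^2)$ while the other collapses to an expression depending only on the eigenvalues of $C=A^{-1/2}BA^{-1/2}$. After that, the threshold $1+\sqrt{2}$ appears from the scalar inequality $(1+t)^2\gtreqqless 2$ in a transparent way.
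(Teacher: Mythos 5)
Your proposal is correct and follows the same route as the paper: part (i) via the determinant/log-det concavity inequality (equivalently the Bregman-sum representation), and part (ii) by computing the Hessian $\half(A^{-1}\kron A^{-1})-(A+B)^{-1}\kron(A+B)^{-1}$ and determining its sign. The only difference is that you spell out the congruence-and-diagonalization step that pins down the threshold $1+\sqrt{2}$, which the paper simply asserts.
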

\begin{proof}
  Since $\ds^2$ is a sum of Bregman divergences, property~(i) follows from
  definition~(\ref{eq.js}). Alternatively, note that $\det\left((A+B)/2\right) \ge [\det(A)]^{1/2}[\det(B)]^{1/2}$, with equality if and only if $A=B$. Part (ii) follows upon analyzing the Hessian $\nabla_A^2\ds^2(A,B)$. This Hessian can be identified with the matrix
  \begin{equation}
    \label{eq.43}
    H := \half\bigl(\inv{A}\kron\inv{A}\bigr) - \invp{A+B}\kron\invp{A+B},
  \end{equation}
  where $\kron$ is the usual the Kronecker product. Matrix $H$ is positive definite for $A \le (1+\sqrt{2})B$ and negative definite for $A \ge  (1+\sqrt{2})B$, which  proves~(ii).
\end{proof}

Below we show that $\ds^2$ is richer than a divergence: its square-root $\ds$ is actually a distance on $\pd_n$. This is the first main result of our paper. Previous authors~\cite{iccv11,chebbi} conjectured this result but could not establish it, perhaps because both ultimately sought to map $\ds$ to a Hilbert space metric. This approach fails because HPD matrices do not form even a (multiplicative) semigroup, which renders the powerful theory of harmonic analysis on semigroups~\cite{berg} inapplicable to $\ds$. This difficulty necessitates a different path to proving metricity of $\ds$, and this is the subject of the next section.

\section{The $\ssd$ metric}
\label{sec.metric}
In this section we prove the following main theorem.
\begin{theorem}
  \label{thm.metric}
  Let $\ds$ be defined by~\eqref{eq.ss}. Then, $\ssd$ is a metric on $\pd_n$.
\end{theorem}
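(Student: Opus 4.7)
Of the three metric axioms, symmetry $\ssd(X,Y)=\ssd(Y,X)$ is immediate from the definition of $\jsld$, and nonnegativity together with the identification $\ssd(X,Y)=0\iff X=Y$ is exactly Proposition~\ref{prop.scvx}(i). The whole content of the theorem is therefore the triangle inequality
\[
\ssd(X,Z) \le \ssd(X,Y) + \ssd(Y,Z), \qquad X,Y,Z \in \pd_n,
\]
which after squaring reads $\jsld(X,Z) \le \jsld(X,Y)+\jsld(Y,Z)+2\sqrt{\jsld(X,Y)\jsld(Y,Z)}$.

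First I would do a scalar reduction. By the congruence invariance of Corollary~\ref{corr.cong} one may conjugate by $Y^{-1/2}$ and reduce to showing $\ssd(A,B) \le \ssd(A,I) + \ssd(I,B)$, with $A=Y^{-1/2}XY^{-1/2}$ and $B=Y^{-1/2}ZY^{-1/2}$. Diagonalizing $A$ and using Lemma~\ref{lem.basic}(i) gives the clean formula $\jsld(A,I) = \sum_i \log\cosh(\tfrac{1}{2}\log \lambda_i(A))$. When the three matrices commute they are simultaneously diagonalizable and the triangle inequality collapses to subadditivity of $g(t):=\sqrt{\log\cosh(t/2)}$ on $[0,\infty)$. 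A short calculation, reducing $2u''u \le (u')^2$ with $u=\log\cosh(t/2)$ to the elementary inequality $\sinh^2(t/2) \ge 2\log\cosh(t/2)$, shows that $g$ is concave on $[0,\infty)$ with $g(0)=0$, hence subadditive. This settles the commuting case, in particular $n=1$, recovering the scalar situation indicated in the comparison with~\cite{chebbi}.

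The hard step is lifting from the commuting to the noncommuting setting. The natural route is Schoenberg: if the kernel $e^{-\beta \jsld(X,Y)}$ were positive semidefinite for every $\beta>0$, then $\jsld$ would be conditionally negative definite and $\ssd$ would even embed isometrically into a Hilbert space. I would first secure PSDness at least for large $\beta$ by applying the Siegel gamma integral
\[
\mydet(M)^{-\beta} \;=\; \frac{1}{\Gamma_n(\beta)}\int_{\pd_n} e^{-\trace(MS)} \mydet(S)^{\beta-(n+1)/2}\, dS, \qquad \beta>(n-1)/2,
\]
with $M=(X+Y)/2$: the resulting factorization presents $\mydet((X+Y)/2)^{-\beta}$ as an $L^2$-inner product of $S$-dependent functions of $X$ and $Y$, and a rank-one rescaling by $\mydet(X_i)^{\beta/2}$ then shows that $e^{-\beta \jsld}$ is PSD on the range $\beta>(n-1)/2$.

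The main obstacle, and the reason previous authors could not close the argument, is that this PSDness fails for small $\beta$ once $n\ge 2$ (the valid set is a Gindikin-type combination of a discrete ladder and a half-line, as the remarks on the characterization of admissible $\beta$ hint). Consequently $\jsld$ is \emph{not} CND and no direct Hilbert-space embedding of $\ssd$ exists. To bypass Schoenberg I would attempt either (a) an analytic-continuation / limiting argument that bootstraps the triangle inequality at arbitrary triples from the Hilbert embeddings available at every $\beta>(n-1)/2$, or (b) a direct determinantal proof of the equivalent inequality
\[
\bigl(\jsld(X,Z)-\jsld(X,Y)-\jsld(Y,Z)\bigr)^2 \;\le\; 4\,\jsld(X,Y)\,\jsld(Y,Z)
\]
(whenever the left-hand bracket is nonnegative), using matrix-mean inequalities that compare the arithmetic mean $(X+Y)/2$ with the geometric mean $X\gm Y$. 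This last noncommutative step is where I expect essentially all the technical difficulty to reside.
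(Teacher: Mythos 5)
Your treatment of the easy axioms and of the commuting case is fine (the concavity argument for $g(t)=\sqrt{\log\cosh(t/2)}$ is correct and is a nice elementary substitute for the paper's negative-definiteness argument via the Gamma integral, though for diagonal matrices of size $n>1$ you still need a Minkowski step to pass from coordinatewise subadditivity to the triangle inequality for the $\ell_2$-combination $\ssd^2(X,Y)=\sum_i\smet^2(X_{ii},Y_{ii})$). But the proof has a genuine gap exactly where you say the difficulty resides: neither of your two candidate strategies for the noncommutative case is carried out, and option (a) cannot work as described. Knowing that $e^{-\beta\jsld}$ is positive definite for $\beta>\half(n-1)$ gives, via Schoenberg, that $\sqrt{1-e^{-\beta\jsld}}$ is a Hilbertian metric for each such $\beta$; to recover the triangle inequality for $\sqrt{\jsld}$ you would need to let $\beta\to 0^+$ (since $(1-e^{-\beta\jsld})/\beta\to\jsld$), and that is precisely the regime in which positive definiteness fails for $n\ge 2$ (the paper exhibits a $5\times 5$ counterexample at $\beta=0.1$). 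Option (b) is a restatement of the problem, not an argument.

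The mechanism the paper actually uses, and which your sketch misses, is a reduction that makes \emph{two} of the three matrices simultaneously diagonal rather than one of them equal to $I$. By Lemma~\ref{lem.diag2} there is an invertible $P$ with $P^*XP=I$ and $P^*YP=D$ diagonal, so by congruence invariance the triangle inequality becomes $\ssd(I,D)\le\ssd(I,Z)+\ssd(D,Z)$ with $Z$ arbitrary. Now $\ssd(I,D)=\ssd(I,D^\da)$ and $\ssd(I,Z)=\ssd(I,\Eig^\da(Z))$ depend only on eigenvalues, and the one genuinely noncommutative term is handled by the rearrangement inequality $\det(A+B)\ge\prod_i\bigl(\lambda_i^\da(A)+\lambda_i^\da(B)\bigr)$ (Theorem~\ref{thm.det}), which yields $\ssd(D^\da,\Eig^\da(Z))\le\ssd(D,Z)$; the diagonal triangle inequality (Theorem~\ref{thm.diag}) then finishes the proof. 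Note that your own reduction, with $I$ at the apex, could also be closed this way using the \emph{other} half of Theorem~\ref{thm.det}, namely $\ssd(A,B)\le\ssd(\Eig^\da(A),\Eig^\ua(B))$, since $\ssd(A,I)$ and $\ssd(I,B)$ are already spectral; but some such eigenvalue-rearrangement comparison between $\det(A+B)$ and its diagonalized surrogates is the indispensable ingredient, and it is absent from your proposal.
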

\vskip 5pt
\noindent The proof of Theorem~\ref{thm.metric} depends on several results, which we first establish.\footnote{We note that a referee wondered whether the ``metrization'' results of \citep{chen08,chen08b} could yield an alternative proof of Thm.~\ref{thm.metric}. Unfortunately, those results rely very heavily on the commutativity and total-order available on $\reals$, both of which are missing in $\pd_n$. Indeed, for the  commutative case, Lemma~\ref{lem.cpd} alone suffices to prove that $\ds$ is a metric.}

\begin{definition}[\relax\protect{\!\!\cite[Def.~1.1]{berg}}]\normalfont
  \label{def.cnd}
  Let $\Xc$ be a nonempty set. A function $\psi: \Xc \times \Xc \to \reals$ is said to be \emph{negative definite} if for all $x, y \in \Xc$, $\psi(x,y) = \psi(y,x)$, and the inequality
  \begin{equation*}
    \nlsum_{i,j=1}^n c_ic_j\psi(x_i,x_j) \le 0,
  \end{equation*}
  holds for all integers $n \ge 2$, and subsets $\set{x_i}_{i=1}^n \subseteq \Xc$, $\set{c_i}_{i=1}^n \subseteq \reals$ with $\nlsum_{i=1}^n c_i = 0$.
\end{definition}

\begin{theorem}[\relax\protect{\!\!\cite[Prop.~3.2, Ch.~3]{berg}}]
  \label{thm.schoen}
  Let $\psi: \Xc \times \Xc \to \reals$ be negative definite. Then, there is a Hilbert space $\Hc \subseteq \reals^{\Xc}$ and a mapping $x \mapsto \varphi(x)$ from $\Xc \to \Hc$ such that one has the relation
  \begin{equation}
    \label{eq.59}
    \pnorm{\varphi(x)-\varphi(y)}{\Hc}^2 = \half(\psi(x,x) + \psi(y,y)) - \psi(x,y).
  \end{equation}
  Moreover, negative definiteness of $\psi$ is necessary for such a mapping to exist.
\end{theorem}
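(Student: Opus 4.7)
The plan is to invoke Schoenberg's classical base-point trick: convert the conditionally negative definite $\psi$ into a bona fide positive semidefinite kernel, and then apply the standard reproducing-kernel Hilbert space (RKHS) construction. Pick any base point $x_0 \in \Xc$ and define
\begin{equation*}
  K(x,y) := \half\bigl[\psi(x,x_0) + \psi(y,x_0) - \psi(x,y) - \psi(x_0,x_0)\bigr].
\end{equation*}

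The key step---and the main arithmetical obstacle---is to show that $K$ is positive semidefinite on $\Xc$. Given any finite sample $\set{x_i}_{i=1}^n \subseteq \Xc$ and real scalars $\set{c_i}_{i=1}^n$, I would adjoin an auxiliary weight $c_0 := -\nlsum_{i=1}^n c_i$ so that $\nlsum_{i=0}^n c_i = 0$, and apply Definition~\ref{def.cnd} to the augmented configuration $(x_0,x_1,\ldots,x_n)$ with weights $(c_0,c_1,\ldots,c_n)$. Splitting off the terms with $i=0$ or $j=0$, substituting $c_0 = -\nlsum_i c_i$, and using symmetry $\psi(\cdot,x_0)=\psi(x_0,\cdot)$ collapses the resulting inequality to exactly $\nlsum_{i,j=1}^n c_ic_j K(x_i,x_j) \ge 0$. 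This augmented-tuple calculation is where all of the sign book-keeping lives.

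With $K$ in hand, the Hilbert space is built in the usual way. Let $V$ be the real span of the formal symbols $\set{K_x := K(x,\cdot) : x \in \Xc} \subseteq \reals^{\Xc}$, equipped with the bilinear form determined by $\ip{K_x}{K_y} := K(x,y)$ and extended by linearity; positive semidefiniteness of $K$ makes this a genuine pre-inner product. Quotient $V$ by its null space, take the Hilbert completion, and call the result $\Hc$; let $\varphi(x) \in \Hc$ be the equivalence class of $K_x$, so that $\ip{\varphi(x)}{\varphi(y)}_{\Hc} = K(x,y)$. A direct expansion
\begin{equation*}
  \norm{\varphi(x)-\varphi(y)}{\Hc}^2 = K(x,x) + K(y,y) - 2K(x,y),
\end{equation*}
followed by substitution of the definition of $K$, allows the terms involving $x_0$ to telescope away and produces the right-hand side of~\eqref{eq.59}.

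For necessity, suppose such a $\varphi$ exists and fix scalars $\set{c_i}_{i=1}^n$ with $\nlsum_i c_i = 0$. Using the polarization identity $2\ip{u}{v} = \norm{u}{\Hc}^2 + \norm{v}{\Hc}^2 - \norm{u-v}{\Hc}^2$, the nonnegative quantity $\bignorm{\nlsum_i c_i \varphi(x_i)}{\Hc}^2 = \nlsum_{i,j}c_ic_j\ip{\varphi(x_i)}{\varphi(x_j)}_{\Hc}$ expands so that the single-point norm terms $\norm{\varphi(x_i)}{\Hc}^2$ are annihilated by $\nlsum_i c_i = 0$, and substituting~\eqref{eq.59} shows $\nlsum_{i,j}c_ic_j\psi(x_i,x_j)$ has exactly the sign demanded by Definition~\ref{def.cnd}. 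The hard part throughout is sign-matching: Berg's hypothesis yields a ``$\le 0$'' inequality, whereas the target $K$ must be ``$\ge 0$'', and reconciling the two via the auxiliary weight $c_0 = -\nlsum c_i$ is precisely what forces the base-point gymnastics; once that is pinned down, the remaining steps reduce to the standard RKHS construction.
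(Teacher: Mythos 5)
Your argument is correct, and it is essentially \emph{the} proof of this result: the paper does not prove Theorem~\ref{thm.schoen} at all, but simply cites \cite[Prop.~3.2, Ch.~3]{berg}, and what you have written out is precisely the classical Schoenberg/Berg base-point construction that underlies that citation. The augmented-tuple computation with $c_0 = -\nlsum_{i=1}^n c_i$ does show that $K(x,y)=\half[\psi(x,x_0)+\psi(y,x_0)-\psi(x,y)-\psi(x_0,x_0)]$ is positive semidefinite, the quotient-and-complete construction gives $\ip{\varphi(x)}{\varphi(y)}_{\Hc}=K(x,y)$ (and can be realized inside $\reals^{\Xc}$ via $f\mapsto(x\mapsto\ip{f}{K_x})$), and the polarization argument for necessity is sound.

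One caveat you should not gloss over: expanding $K(x,x)+K(y,y)-2K(x,y)$ gives
\begin{equation*}
  \norm{\varphi(x)-\varphi(y)}{\Hc}^2 \;=\; \psi(x,y) - \half\bigl(\psi(x,x)+\psi(y,y)\bigr),
\end{equation*}
which is the \emph{negative} of the right-hand side of~\eqref{eq.59} as printed. The printed formula cannot be correct as stated: taking $n=2$, $c_1=1$, $c_2=-1$ in Definition~\ref{def.cnd} shows $\half(\psi(x,x)+\psi(y,y))-\psi(x,y)\le 0$, so it could never equal a squared norm except when it vanishes. The statement in the paper carries a sign typo, and indeed the paper's own use of the theorem in Lemma~\ref{lem.cpd} writes $\smet^2(x,y)=\psi(x,y)-\half(\psi(x,x)+\psi(y,y))$, consistent with what your construction actually delivers. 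So your proof is right, but your claim that the telescoping ``produces the right-hand side of~\eqref{eq.59}'' should be amended to say it produces the corrected (negated) expression; the same sign correction is needed in your necessity step, where substituting the corrected formula yields $\nlsum_{i,j}c_ic_j\psi(x_i,x_j)=-2\bignorm{\nlsum_i c_i\varphi(x_i)}{\Hc}^2\le 0$, as Definition~\ref{def.cnd} requires.
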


Theorem~\ref{thm.schoen} helps prove the triangle inequality for the scalar case.
\begin{lemma}
  \label{lem.cpd}
  Define, the scalar version of $\sqrt{\jsld}$ as
  \begin{equation*}
    \smet(x,y) := \sqrt{\log[(x+y)/(2\sqrt{xy})]},\quad x, y > 0.
  \end{equation*}
  Then, $\smet$ satisfies the triangle inequality, i.e.,
  \begin{equation}
    \label{eq.11}
    \smet(x,y) \le \smet(x,z) + \smet(y,z)\quad\text{for all}\ \ x, y, z > 0.
  \end{equation}
\end{lemma}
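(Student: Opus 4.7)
The plan is to invoke Theorem~\ref{thm.schoen}: if I can show that the scalar kernel $\psi(x,y) := \log\bigl[(x+y)/(2\sqrt{xy})\bigr]$ is negative definite on $(0,\infty)$, then the embedding in Theorem~\ref{thm.schoen} realises $\psi(x,y)$ as a squared Hilbert-space distance $\|\varphi(x)-\varphi(y)\|_{\Hc}^2$ (note that $\psi(x,x)=0$), so $\smet=\sqrt{\psi}$ inherits the triangle inequality directly from the Hilbert-space norm. The whole task thus reduces to checking one kernel-inequality.

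Expanding, $\psi(x,y) = \log(x+y) - \tfrac12\log x - \tfrac12\log y - \log 2$. Any term of the form $a(x)+a(y)+\text{const}$ contributes $0$ to $\sum_{i,j}c_ic_j\,[\cdot]$ whenever $\sum_ic_i=0$. So negative definiteness of $\psi$ reduces to that of the single kernel $(x,y)\mapsto\log(x+y)$ on $(0,\infty)$. For this I would use the classical integral representation
\[
   (x+y)^{-\alpha}\;=\;\frac{1}{\Gamma(\alpha)}\int_0^\infty t^{\alpha-1}e^{-t(x+y)}\,dt,\qquad \alpha>0.
\]
For each $t>0$ the integrand factors as $e^{-tx}\cdot e^{-ty}$, a rank-one positive definite kernel, so $(x+y)^{-\alpha}$ is positive definite for every $\alpha>0$. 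Picking any scalars $\{c_i\}$ with $\sum_ic_i=0$ (so $\sum_{i,j}c_ic_j=0$) and dividing by $\alpha$,
\[
   \sum_{i,j}c_ic_j\,\frac{(x_i+x_j)^{-\alpha}-1}{\alpha}\;\ge\;0\qquad\text{for all }\alpha>0.
\]
Letting $\alpha\to 0^+$, each summand's numerator-quotient tends to $-\log(x_i+x_j)$; the sum is finite so the limit passes inside, yielding $\sum_{i,j}c_ic_j\log(x_i+x_j)\le 0$, i.e.\ negative definiteness of $\log(x+y)$, and hence of $\psi$.

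I expect the principal obstacle to be technical rather than structural: making the $\alpha\to 0^+$ step airtight (a finite-sum dominated-convergence or mean-value-theorem argument) and squaring the sign convention of Theorem~\ref{thm.schoen} with $\psi\ge 0$ so that the embedding encodes $\psi$ itself (and not $-\psi$) as the squared distance. Once those are pinned down, \eqref{eq.11} is immediate from $\smet(x,y)=\|\varphi(x)-\varphi(y)\|_{\Hc}$ and the triangle inequality for $\|\cdot\|_{\Hc}$ applied to $\varphi(x),\varphi(y),\varphi(z)$.
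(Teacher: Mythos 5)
Your proposal is correct and follows essentially the same route as the paper: both reduce the claim to negative definiteness of a $\log(x+y)$-type kernel, establish it via the Gamma-function integral $(x+y)^{-\alpha}=\Gamma(\alpha)^{-1}\int_0^\infty t^{\alpha-1}e^{-tx}e^{-ty}\,dt$ (which exhibits $[(x_i+x_j)^{-\alpha}]$ as a Gram matrix), and then invoke the Hilbert-space embedding of Theorem~\ref{thm.schoen} to get the triangle inequality. The only difference is that where the paper cites the Schoenberg-type equivalence \cite[Thm.~2.2, Ch.~3]{berg} to pass from positive definiteness of $(x+y)^{-\alpha}$ to negative definiteness of the logarithm, you prove that direction inline via the elementary limit $\lim_{\alpha\to 0^+}\frac{z^{-\alpha}-1}{\alpha}=-\log z$ on a finite sum with $\sum_i c_i=0$ --- a valid, self-contained substitute (and your closing caveat about the sign convention in Theorem~\ref{thm.schoen} is well taken, since the displayed formula \eqref{eq.59} has the sign of $\psi$ reversed relative to how the paper actually uses it).
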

\begin{proof}
  We show that $\psi(x,y) = \log((x+y)/2)$ is negative definite. Since
  $\smet^2(x,y) = \psi(x,y)-\half(\psi(x,x)+\psi(y,y))$, 
  Theorem~\ref{thm.schoen} then immediately implies the triangle
  inequality~\eqref{eq.11}. To prove that $\psi$ is negative definite, by~\cite[Thm.~2.2, Ch.~3]{berg} we may equivalently show that $e^{-\beta\psi(x,y)} = \pfrac{x+y}{2}^{-\beta}$ is a positive
  definite function for $\beta > 0$, and $x, y > 0$. To that end, it suffices to show that the matrix
  \begin{equation*}
    H = [h_{ij}] = \left[(x_i+x_j)^{-\beta}\right],\quad 1 \le i,j \le n,
  \end{equation*}
  is HPD for every integer $n \ge 1$, and positive numbers $\set{x_i}_{i=1}^n$. Now, observe that 
  \begin{equation}
    \label{eq.12}
    h_{ij} = \frac{1}{(x_i+x_j)^\beta} = \frac{1}{\Gamma(\beta)}\int_0^\infty e^{-t(x_i+x_j)} t^{\beta-1}dt,
  \end{equation}
  where $\Gamma(\beta) = \int_0^\infty e^{-t}t^{\beta-1}dt$ is the Gamma function. Thus, with $f_i(t) = e^{-tx_i}t^{\frac{\beta-1}{2}} \in L_2([0,\infty))$, we see that $[h_{ij}]$ equals the Gram matrix $[\ip{f_i}{f_j}]$, whereby $H \ge 0$.\qed
\end{proof}
Using Lemma~\ref{lem.cpd} obtain the following ``Minkowski'' inequality for $\smet$.
\begin{corollary}
  \label{corr.mink}
  Let $x, y, z \in \reals^n_{++}$; and let $p \ge 1$. Then,
  \begin{equation}
    \label{eq.36}
    \left(\nlsum_i \smet^p(x_i,y_i)\right)^{1/p} \le     \left(\nlsum_i \smet^p(x_i,z_i)\right)^{1/p} + 
    \left(\nlsum_i \smet^p(y_i,z_i)\right)^{1/p}.
  \end{equation}
\end{corollary}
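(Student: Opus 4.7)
The plan is to combine the scalar triangle inequality for $\smet$ (Lemma~\ref{lem.cpd}) with the classical Minkowski inequality for $\ell^p$ norms. No additional analytic input is required; this is a standard recipe for lifting a triangle inequality on a metric $d$ to a Minkowski-type inequality for vectors of $d$-values.

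Concretely, first I would set $a_i := \smet(x_i,z_i)$ and $b_i := \smet(y_i,z_i)$ and invoke Lemma~\ref{lem.cpd} coordinatewise to obtain
\begin{equation*}
\smet(x_i,y_i)\;\le\;a_i+b_i,\qquad i=1,\ldots,n.
\end{equation*}
Since both sides are nonnegative and $t\mapsto t^p$ is monotone on $[0,\infty)$ for $p\ge 1$, raising to the $p$-th power and summing over $i$ preserves the inequality, giving
\begin{equation*}
\nlsum_i\smet^p(x_i,y_i)\;\le\;\nlsum_i(a_i+b_i)^p.
\end{equation*}

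Next, I would apply the classical Minkowski inequality in $\ell^p(\reals^n)$ to the nonnegative vectors $(a_i)$ and $(b_i)$:
\begin{equation*}
\left(\nlsum_i(a_i+b_i)^p\right)^{1/p}\;\le\;\left(\nlsum_i a_i^p\right)^{1/p}+\left(\nlsum_i b_i^p\right)^{1/p}.
\end{equation*}
Taking $p$-th roots in the previous display and chaining with this one substitutes back the definitions of $a_i,b_i$ to yield exactly~\eqref{eq.36}.

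There is no real obstacle: the only non-trivial ingredient is the scalar triangle inequality for $\smet$, which Lemma~\ref{lem.cpd} has already supplied. The rest is the standard derivation of Minkowski's inequality from a triangle inequality, which in turn rests on Hölder's inequality (or, equivalently, on the convexity of $t\mapsto t^p$). Should one prefer a self-contained path, one could expand $(a_i+b_i)^p=(a_i+b_i)(a_i+b_i)^{p-1}$, apply Hölder to each of the two resulting sums with conjugate exponents $p$ and $p/(p-1)$, and factor out the common term $(\sum_i(a_i+b_i)^p)^{(p-1)/p}$; but invoking Minkowski directly is cleaner.
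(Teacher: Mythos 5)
Your proposal is correct and follows exactly the paper's own argument: apply Lemma~\ref{lem.cpd} coordinatewise, raise to the $p$-th power and sum, then invoke the classical Minkowski inequality. Nothing further is needed.
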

\begin{proof}
  Lemma~\ref{lem.cpd} implies that for positive scalars $x_i$, $y_i$, and $z_i$, we have
  \begin{equation*}
    \smet(x_i,y_i) \le \smet(x_i,z_i) + \smet(y_i,z_i),\qquad 1 \le i \le n.
  \end{equation*}%
  Exponentiate, sum, and invoke Minkowski's inequality to conclude~\eqref{eq.36}.\qed
\end{proof}

\begin{theorem}
  \label{thm.diag}
  Let $X, Y, Z > 0$ be diagonal matrices. Then,
  \begin{equation}
    \label{eq.37}
    \ssd(X,Y) \le \ssd(X, Z) + \ssd(Y, Z)
  \end{equation}
\end{theorem}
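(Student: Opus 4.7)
The plan is to reduce the diagonal case to the scalar Minkowski-type inequality already established in Corollary~\ref{corr.mink}. The key observation is that when $X$, $Y$, $Z$ are diagonal, the \sdiv{} decouples coordinate-wise into a sum of scalar \sdiv{}s.

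Concretely, writing $X = \Diag(x_1,\ldots,x_n)$, $Y = \Diag(y_1,\ldots,y_n)$, and $Z=\Diag(z_1,\ldots,z_n)$ with positive entries, the determinants in the definition~\eqref{eq.ss} factor, so
\begin{equation*}
\jsld(X,Y) = \sum_{i=1}^n \log\frac{x_i+y_i}{2} - \tfrac{1}{2}\sum_{i=1}^n \log(x_i y_i) = \sum_{i=1}^n \smet^2(x_i,y_i).
\end{equation*}
Hence $\ssd(X,Y) = \bigl(\sum_i \smet^2(x_i,y_i)\bigr)^{1/2}$, and likewise for $\ssd(X,Z)$ and $\ssd(Y,Z)$. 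Applying Corollary~\ref{corr.mink} with $p=2$ to the vectors $(x_i)$, $(y_i)$, $(z_i)$ then yields exactly the desired inequality~\eqref{eq.37}.

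There is essentially no obstacle here: the whole content is in Lemma~\ref{lem.cpd} (the scalar triangle inequality via negative definiteness of $\log\frac{x+y}{2}$) and the Minkowski step in Corollary~\ref{corr.mink}. The only thing to check carefully when writing the proof is the identity $\jsld(X,Y) = \sum_i \smet^2(x_i,y_i)$, which is a short computation using multiplicativity of the determinant on diagonal matrices. This theorem is best viewed as the bridge step: it handles commuting (simultaneously diagonalizable) arguments, setting the stage for the general noncommuting case, which will presumably require a reduction—via congruence invariance from Corollary~\ref{corr.cong} and some majorization or limiting argument—to this diagonal situation.
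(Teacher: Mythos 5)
Your proof is correct and follows exactly the paper's own argument: decompose $\ssd^2$ coordinate-wise into scalar terms $\smet^2(x_i,y_i)$ via multiplicativity of the determinant on diagonal matrices, then apply Corollary~\ref{corr.mink} with $p=2$. Nothing further is needed.
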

\begin{proof}
  For diagonal matrices $X$ and $Y$, it is easy to verify that $\ssd^2(X,Y) = \nlsum_i \smet^2(X_{ii}, Y_{ii}).$ Now invoke Corollary~\ref{corr.mink} with $p=2$.\qed
\end{proof}

Next, we recall an important determinantal inequality for positive matrices.
\begin{theorem}[\protect{\!\!\cite[Exercise~VI.7.2]{bhatia97}}]
  \label{thm.det}
  Let $A, B > 0$. Let $\lambda^\downarrow(X)$ denote the vector of eigenvalues of $X$ sorted in decreasing order; define $\lambda^\uparrow(X)$ likewise. Then,
  \begin{equation}
    \label{eq.14}
    \nlprod_{i=1}^n(\lambda_i^\downarrow(A) + \lambda_i^\downarrow(B)) \le \det(A+B) \le
    \nlprod_{i=1}^n(\lambda_i^\downarrow(A)+\lambda_i^\uparrow(B)).
  \end{equation}
\end{theorem}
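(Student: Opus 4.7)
The plan is to prove both bounds in~\eqref{eq.14} simultaneously by a variational argument on the unitary orbit of the pair $(A,B)$. I would fix the spectra of $A$ and $B$, encode them in diagonal matrices $\Lambda_A$ and $\Lambda_B$, and consider the smooth function $f(U,V) := \det(U\Lambda_A U^* + V \Lambda_B V^*)$ on the compact group $\mathrm{U}(n)\times\mathrm{U}(n)$. Its range contains $\det(A+B)$, so it suffices to locate $\min f$ and $\max f$; by compactness both extrema are attained at critical points of $f$.

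Next, I would carry out a first-variation analysis, perturbing $U \mapsto U e^{tX}$ for a skew-Hermitian $X$ (and symmetrically for $V$), to derive the critical-point equation $[\Lambda_A,\,U^*(A'+B')^{-1}U] = 0$, with $A' := U\Lambda_A U^*$ and $B' := V\Lambda_B V^*$. In the generic case of distinct eigenvalues of $\Lambda_A$, the only matrices commuting with $\Lambda_A$ are diagonal, so $U^*(A'+B')^{-1}U$ is diagonal. This forces $(A'+B')^{-1}$ to share an eigenbasis with $A'$, whence $A'$ commutes with $A'+B'$, and therefore with $B'$; the symmetric computation in the $V$-direction corroborates this. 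Degenerate spectra are then handled by a standard continuity argument (approximate by distinct-spectrum matrices and pass to the limit).

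Once $A'$ and $B'$ commute they are simultaneously diagonalizable, so $\det(A'+B') = \nlprod_i(a_{\pi(i)} + b_{\sigma(i)})$ for some permutations $\pi, \sigma$. Fixing $\pi = \mathrm{id}$ by a basis permutation reduces the problem to extremizing $\nlprod_i (\lambda_i^{\downarrow}(A) + \lambda_{\sigma(i)}(B))$ over $\sigma \in S_n$. I would then invoke the classical rearrangement inequality for bivariate functions: since $\partial^2 \log(x+y)/(\partial x\,\partial y) = -(x+y)^{-2} < 0$, the map $(x,y) \mapsto \log(x+y)$ is strictly submodular, and hence $\nlsum_i \log(a_i + b_{\sigma(i)})$ is minimized at the monotone matching $\sigma = \mathrm{id}$---giving the lower bound $\nlprod_i (\lambda_i^{\downarrow}(A) + \lambda_i^{\downarrow}(B))$---and maximized at the reverse (anti-monotone) matching, giving the upper bound $\nlprod_i (\lambda_i^{\downarrow}(A) + \lambda_i^{\uparrow}(B))$.

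The hardest part will be the variational step: verifying rigorously that the first-order condition forces simultaneous commutativity of $A'$ and $B'$, and then confirming that the degenerate-spectrum case is indeed covered by continuity. An alternative that sidesteps the variational machinery (at least for the lower bound) is a direct application of Ky Fan's majorization $\lambda^{\downarrow}(A+B) \prec \lambda^{\downarrow}(A) + \lambda^{\downarrow}(B)$ combined with the Schur-concavity of $\nlprod_i x_i$ on $\reals^n_{++}$, which delivers the lower bound in one line; the upper bound by this route instead requires the subtler Wielandt--Lidskii subset majorization and is complicated by the non-monotonicity of the vector $\bigl(\lambda_i^{\downarrow}(A) + \lambda_i^{\uparrow}(B)\bigr)$, which is why the variational approach is cleaner.
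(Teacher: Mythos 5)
The paper does not actually prove this statement---it is imported verbatim as a known result, with a pointer to \cite[Exercise~VI.7.2]{bhatia97}---so there is no in-paper argument to compare against; what you have written is a genuine proof where the paper only cites. Your proof is correct and is essentially the classical variational proof of Fiedler's determinant inequality: compactness of $\mathrm{U}(n)\times\mathrm{U}(n)$ guarantees the extrema of $\det(U\Lambda_AU^*+V\Lambda_BV^*)$ are attained; the first-order condition $[\Lambda_A,\,U^*(A'+B')^{-1}U]=0$ is the right one (the pairing $(Y,X)\mapsto\trace(YX)$ is nondegenerate on skew-Hermitian matrices, so the full commutator vanishes, not just its trace against a subspace); and when $\Lambda_A$ has distinct eigenvalues this already forces $A'$ to commute with $(A'+B')^{-1}$ and hence with $B'$---you do not even need the $V$-variation, and distinctness of one spectrum suffices, so the continuity limit covers the rest. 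The submodularity of $(x,y)\mapsto\log(x+y)$ gives the correct matching directions (monotone for the minimum, anti-monotone for the maximum), which one can sanity-check on $a=b=(2,1)$: the products are $8$ and $9$ respectively. Your remark on the shortcut for the lower bound via Ky Fan's majorization $\lambda(A+B)\prec\lambda^\downarrow(A)+\lambda^\downarrow(B)$ together with Schur-concavity of $\prod_i x_i$ on $\reals^n_{++}$ is also correct, and you rightly observe that the upper bound does not fall out of that route, which is exactly why the variational argument is the standard one. For the purposes of this paper, though, the citation suffices; if you wanted to inline a proof, yours would do, but only the easy lower bound is actually used later (via Corollary~\ref{cor.eigs} in the proof of Theorem~\ref{thm.metric}), so the one-line Ky Fan argument would cover the paper's needs.
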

\begin{corollary}
  \label{cor.eigs}
  Let $A, B > 0$. Let $\Eig^\downarrow(X)$ denote the diagonal matrix with $\lambda^\downarrow(X)$ as its diagonal; define $\Eig^\uparrow(X)$ likewise. Then,
  \begin{alignat*}{3}
    \ssd(\Eig^\downarrow(A), \Eig^\downarrow(B))  &\le\ \ssd(A,B)\ \ &\le&\ \ \ssd(\Eig^\downarrow(A), \Eig^\uparrow(B)).
  \end{alignat*}
\end{corollary}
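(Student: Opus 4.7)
The proof is essentially a direct application of Theorem~\ref{thm.det} combined with the fact that the ``denominator'' part of $\jsld$ depends only on the spectra of $A$ and $B$.

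First I would observe that $\jsld(X,Y) = \log\det\bigl(\tfrac{X+Y}{2}\bigr) - \tfrac12\log\det X - \tfrac12\log\det Y$, so only the first term (the $\log\det$ of the arithmetic mean) depends on how the eigenvalues of $A$ and $B$ are aligned. Since $\det A = \det \Eig^\downarrow(A)$, $\det B = \det \Eig^\downarrow(B) = \det \Eig^\uparrow(B)$, the ``subtracted'' terms $\tfrac12\log\det A + \tfrac12\log\det B$ are identical in all three quantities $\jsld^2(\Eig^\downarrow(A),\Eig^\downarrow(B))$, $\jsld^2(A,B)$, and $\jsld^2(\Eig^\downarrow(A),\Eig^\uparrow(B))$.

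Hence comparing the three $\jsld^2$ values reduces to comparing $\log\det$ of the three sums $\Eig^\downarrow(A)+\Eig^\downarrow(B)$, $A+B$, and $\Eig^\downarrow(A)+\Eig^\uparrow(B)$. The first and third are themselves diagonal, with determinants equal respectively to $\prod_i(\lambda_i^\downarrow(A)+\lambda_i^\downarrow(B))$ and $\prod_i(\lambda_i^\downarrow(A)+\lambda_i^\uparrow(B))$. Theorem~\ref{thm.det} then gives exactly
\[
   \det\bigl(\Eig^\downarrow(A)+\Eig^\downarrow(B)\bigr) \;\le\; \det(A+B) \;\le\; \det\bigl(\Eig^\downarrow(A)+\Eig^\uparrow(B)\bigr).
\]
Taking $\log$, dividing by $2$, and adding back the common term $-\tfrac12\log\det(AB) - \tfrac{n}{2}\log 2$ yields the same chain of inequalities for $\jsld^2$, and finally taking square roots gives the claim.

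There is no real obstacle here: the only small thing to notice is the cancellation of the $\log\det A$ and $\log\det B$ terms, which makes Theorem~\ref{thm.det} applicable directly. The corollary is essentially a cosmetic restatement of Bhatia's determinantal inequality in the language of $\ssd$.
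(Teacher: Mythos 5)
Your proof is correct and follows essentially the same route as the paper's: apply Theorem~\ref{thm.det} to $A+B$, observe that the subtracted terms $\tfrac12\log\det A+\tfrac12\log\det B$ (together with the $n\log 2$ coming from the halving) are invariant under permuting the eigenvalues and hence common to all three expressions, then take logarithms and square roots. The only blemishes are cosmetic --- the common additive constant is $-n\log 2$ rather than $-\tfrac{n}{2}\log 2$, and the intermediate ``dividing by $2$'' step is superfluous --- and neither affects the argument, since these constants cancel in the comparison.
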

\begin{proof}
  Scale $A$ and $B$ by 2, divide each term in~\eqref{eq.14} by $\sqrt{\det(A)\det(B)}$, and note that $\det(X)$ is invariant to permutations of $\lambda(X)$; take logarithms to conclude.\qed
\end{proof}

The final result we need is well-known in linear algebra (we provide a proof).
\begin{lemma}
  \label{lem.diag2}
  Let $A > 0$, and let $B$ be Hermitian. There is a matrix $P$ for which
  \begin{equation}
    \label{eq.32}
    P^*AP = I,\quad\text{and}\quad P^*BP = D,\quad{where}\ D\ \text{is diagonal}.
  \end{equation}
\end{lemma}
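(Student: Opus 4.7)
The plan is to use the positive square root of $A$ to first reduce the problem to the standard spectral theorem for a single Hermitian matrix. Since $A>0$, it admits a Hermitian positive definite square root $A^{1/2}$, and its inverse $A^{-1/2}$ is also Hermitian and positive definite. Setting $Q := A^{-1/2}$, we immediately get $Q^*AQ = A^{-1/2}AA^{-1/2} = I$, so $Q$ handles the congruence of $A$ to the identity.

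Next I would diagonalize the transformed matrix $Q^*BQ = A^{-1/2}BA^{-1/2}$. Since $B$ is Hermitian and $A^{-1/2}$ is Hermitian, the product $A^{-1/2}BA^{-1/2}$ is Hermitian. Therefore the spectral theorem gives a unitary $U$ and a real diagonal matrix $D$ with $U^*(A^{-1/2}BA^{-1/2})U = D$.

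Finally, I would define $P := QU = A^{-1/2}U$ and verify both equations simultaneously. Unitarity of $U$ preserves the first identity, since
\begin{equation*}
P^*AP = U^*A^{-1/2}\,A\,A^{-1/2}U = U^*IU = I,
\end{equation*}
while by construction
\begin{equation*}
P^*BP = U^*A^{-1/2}BA^{-1/2}U = D.
\end{equation*}
Invertibility of $P$ follows because both $A^{-1/2}$ and $U$ are invertible. There is no real obstacle here; the only subtlety worth flagging is that $P$ is generally \emph{not} unitary (we are doing simultaneous \emph{congruence}, not simultaneous unitary diagonalization), which is exactly why the result does not require $A$ and $B$ to commute.
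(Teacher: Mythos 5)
Your proof is correct and follows essentially the same route as the paper: reduce $A$ to the identity by a congruence with (a form of) $A^{-1/2}$, then unitarily diagonalize the resulting Hermitian matrix $A^{-1/2}BA^{-1/2}$ and compose the two transformations. The paper merely writes the square root explicitly through the eigendecomposition $A=U\Lambda U^*$, so the two arguments are the same up to notation.
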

\begin{proof}
  Let $A=U\Lambda U^*$, and define $S=\Lambda^{-1/2}U$. The
  the matrix $S^*U^*BSU$ is Hermitian; so let $V$ diagonalize it to $D$. Set $P=USV$, to obtain
  \begin{equation*}
    P^*AP = V^*S^*U^*U\Lambda U^*USV = V^*U^*\Lambda^{-1/2}\Lambda \Lambda^{-1/2}UV = I;
  \end{equation*}
  and by construction, it follows that $P^*BP = V^*S^*U^*BUSV=D$.\qed
\end{proof}

Accoutered with the above results, we can finally prove Theorem~\ref{thm.metric}.

\begin{proof}{(Theorem~\ref{thm.metric}).\hskip4pt}
  We need to show that $\ssd$ is symmetric, nonnegative, definite, and that is satisfies the triangle inequality. Symmetry is obvious. Nonnegativity and definiteness were shown in Prop.~\ref{prop.scvx}. The only hard part is to prove the triangle inequality, a result that has eluded previous attempts \citep{iccv11,chebbi}.

  Let $X, Y, Z > 0$ be arbitrary. From Lemma~\ref{lem.diag2} we know that there is a matrix $P$ such that $P^*XP=I$ and $P^*YP=D$. Since $Z > 0$ is arbitrary, and congruence preserves positive definiteness, we may write just $Z$ instead of $P^*ZP$. Also, since $\ssd(P^*XP,P^*YP)=\ssd(X,Y)$ (see Prop.~\ref{prop.basic}), proving the triangle inequality reduces to showing that
  \begin{equation}
    \label{eq.35}
    \ssd(I,D) \le \ssd(I, Z) + \ssd(D, Z).
  \end{equation}
  Consider now the diagonal matrices $D^\downarrow$ and $\Eig^\downarrow(Z)$. Theorem~\ref{thm.diag} asserts
  \begin{equation}
    \label{eq.26}
    \ssd(I, D^\downarrow) \le \ssd(I, \Eig^{\downarrow}(Z)) + \ssd(D^\downarrow, \Eig^\downarrow(Z)).
  \end{equation}
  Prop.~\ref{prop.basic}(i) implies that $\ssd(I,D)=\ssd(I,D^\downarrow)$ and 
  $\ssd(I,Z)=\ssd(I,\Eig^\downarrow(Z))$, while Corollary~\ref{cor.eigs} shows that $\ssd(D^\downarrow, \Eig^\downarrow(Z)) \le \ssd(D,Z)$. Combining these inequalities, we immediately obtain~\eqref{eq.35}.
\end{proof}

We now turn our attention to a connection of importance to machine learning and approximation theory: kernel functions related to $\ssd$. Indeed, some of connections (e.g., Theorem~\ref{thm.wallach}) have already been recently useful in computer vision~\citep{harandi12}.


\subsection{Hilbert space embedding}
\label{sec.hilb}
Since $\ssd$ is a metric, and Lemma~\ref{lem.cpd} shows that for scalars, $\ssd$ embeds isometrically into a Hilbert space, one may ask if $\ssd(X,Y)$ also admits such an embedding. But as mentioned previously, it is the lack of such an embedding that necessitated a different route to metricity. Let us look more carefully at what goes wrong, and what kind of Hilbert space embeddability does $\ssd^2$ admit.

Theorem~\ref{thm.schoen} implies that a Hilbert space embedding exists if and only if $\ssd^2(X,Y)$ is a negative definite kernel; equivalently, if and only if the map (\emph{cf.} Lemma~\ref{lem.cpd})
\begin{equation*}
  e^{-\beta\ssd^2(X,Y)} = \frac{\det(X)^\beta\det(Y)^\beta}{\det((X+Y)/2)^\beta},
\end{equation*}
is a positive definite kernel for $\beta > 0$. It suffices to check whether the matrix
\begin{equation}
  \label{eq.60}
  H_\beta = [h_{ij}] = \left[\det(X_i+X_j)^{-\beta} \right],\quad 1 \le i, j \le m,
\end{equation}
is positive definite for every $m \ge 1$ and arbitrary HPD matrices $X_1,\ldots, X_m \in \pd_n$. 

Unfortunately, a quick numerical experiment reveals that $H_\beta$ can be indefinite. A counterexample is given by the following positive definite matrices ($m=5$, $n=2$)
\begin{equation}
  \label{eq.61}
  \begin{split}
    X_1 =
    &\begin{bmatrix}
      0.1406 &0.0347\\
      0.0347 &0.1779
    \end{bmatrix},\ 
    X_2 =
    \begin{bmatrix}
      2.0195 &0.0066\\
      0.0066 &0.2321
    \end{bmatrix},\
    X_3 =
    \begin{bmatrix}
      1.0924&0.0609\\
      0.0609&1.2520
    \end{bmatrix},\\
    &X_4 =
    \begin{bmatrix}
      1.0309&0.8694\\
      0.8694&1.2310
    \end{bmatrix},\ \text{and}\ \ 
    X_5 =
    \begin{bmatrix}
      0.2870&-0.4758\\
      -0.4758&2.3569
    \end{bmatrix},
  \end{split}
\end{equation}
and by setting $\beta=0.1$, for which $\lambda_{\min}(H_\beta) = -0.0017$. This counterexample destroys hopes of embedding the metric space $(\pd_n,\ssd)$ isometrically into a Hilbert space.

Although matrix~\eqref{eq.60} is not HPD in general, we might ask: \emph{For what choices of $\beta$ is $H_\beta$ HPD?} Theorem~\ref{thm.wallach} answers this question for $H_\beta$ formed from symmetric real positive definite matrices, and characterizes the values of $\beta$ necessary and sufficient for $H_\beta$ to be positive definite. We note here that the case $\beta=1$ was essentially treated in~\citep{cuturi}, in the context of semigroup kernels on measures.

\begin{theorem}
  \label{thm.wallach}
  Let $X_1,\ldots,X_m$ be real symmetric matrices in $\pd_n$. The $m \times m$ matrix $H_\beta$ defined by~\eqref{eq.60} is positive definite, if and only if $\beta$ satisfies
  \begin{equation}
    \label{eq.62}
    \beta \in \set{\tfrac{j}{2}: j \in \mathbb{N},\ \text{and}\ 1 \le j \le (n-1)} \cup \set{\gamma : \gamma \in \reals, \text{and}\ \gamma > \half(n-1)}.
  \end{equation} 
\end{theorem}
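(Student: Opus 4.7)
The strategy is to exhibit, for each $\beta$ in the Wallach set, a nonnegative Radon measure $\mu_\beta$ supported on the closure of $\pd_n$ whose Laplace transform recovers $\det(X)^{-\beta}$:
\[ \det(X)^{-\beta} \;=\; \int e^{-\trace(XY)}\, d\mu_\beta(Y). \]
Whenever such $\mu_\beta$ exists, the entries
\[ [H_\beta]_{ij} \;=\; \det(X_i+X_j)^{-\beta} \;=\; \int e^{-\trace(X_iY)/2}\,e^{-\trace(X_jY)/2}\,d\mu_\beta(Y) \]
display $H_\beta$ as a Gram matrix in $L^2(\mu_\beta)$, so sufficiency follows. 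Conversely, if $H_\beta$ is positive definite for every choice of distinct $X_1,\ldots,X_m \in \pd_n$, then a Bochner-type reconstruction on the symmetric cone produces a positive $\mu_\beta$ with Laplace transform $\det(X)^{-\beta}$; hence necessity reduces to asking for which $\beta$ such a positive $\mu_\beta$ can exist.

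I would construct $\mu_\beta$ in two pieces. For the continuous regime $\beta > (n-1)/2$, invoke the classical Gindikin--Koecher identity
\[ \det(X)^{-\beta} \;=\; \frac{1}{\Gamma_n(\beta)}\int_{\pd_n} e^{-\trace(XY)}\, (\det Y)^{\beta-(n+1)/2}\,dY, \]
where $\Gamma_n(\beta) = \pi^{n(n-1)/4}\prod_{k=0}^{n-1}\Gamma(\beta - k/2)$ is Siegel's multivariate Gamma function; the integral converges absolutely precisely when $\beta > (n-1)/2$, and its integrand is pointwise positive, so $\mu_\beta$ is an absolutely continuous positive measure on $\pd_n$. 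For the discrete regime $\beta = j/2$ with $1 \le j \le n-1$, the desired $\mu_\beta$ must be singular, concentrated on rank-$j$ matrices on the boundary of $\pd_n$; I would obtain it through the column-wise Gaussian identity
\[ \det(X)^{-j/2} \;=\; (2\pi)^{-nj/2}\int_{\reals^{n\times j}} e^{-\trace(A^T X A)/2}\,dA. \]
Combined with the additive decomposition $\trace(A^T(X_i+X_l)A) = \trace(A^T X_i A) + \trace(A^T X_l A)$, this exhibits $H_{j/2}$ as a Gram matrix in $L^2(\reals^{n\times j})$ via the functions $A \mapsto e^{-\trace(A^T X_i A)/2}$. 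Strict (rather than merely semi-) positive definiteness follows in both regimes from linear independence of these Gaussian kernels whenever the $X_i$ are distinct, which can be verified by comparing leading quadratic forms.

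For the necessity I would appeal to Gindikin's theorem on Riesz distributions over symmetric cones (Faraut--Koranyi, \emph{Analysis on Symmetric Cones}, Ch.~VII), which classifies the analytic family $\{R_\beta\}_{\beta \in \cc}$ with Laplace transform $\Gamma_n(\beta)\det(X)^{-\beta}$ as a positive tempered measure precisely when $\beta$ lies in the Wallach set~\eqref{eq.62}. Combining this with the reconstruction step of the first paragraph forces $\beta$ into~\eqref{eq.62}; the numerical example~\eqref{eq.61} is a concrete instance of failure at $n=2$, $\beta = 0.1$. The hard part is exactly this necessity direction: ruling out every $\beta$ in the gaps $(k/2,(k+1)/2) \subset (0,(n-1)/2)$ either invokes the full Gindikin classification, or demands a hands-on construction of counterexamples in each gap, typically built by perturbing rank-$k$ degenerations where $\det(X_i+X_j)$ factorizes in a way incompatible with the exponent $\beta$. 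Sufficiency is essentially routine once the right positive measure is identified; the depth of the theorem lies in the necessity.
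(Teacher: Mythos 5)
Your proposal is correct and follows essentially the same route as the paper: Gram-matrix constructions from Laplace-transform identities for sufficiency (the multivariate Gamma integral for $\beta>\half(n-1)$, and Gaussian integrals for the half-integers, where your column-wise $\reals^{n\times j}$ integral is just the unrolled form of the paper's Schur-product iteration of the $j=1$ case), and an appeal to the Gindikin/Wallach-set classification in Faraut--Kor\'anyi, Ch.~VII, for necessity. Your remark that strictness of the positive definiteness needs the linear independence of the kernels to be checked is a fair point that the paper itself glosses over.
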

\begin{proof}
  We first prove the ``if'' part. Recall therefore, the Gaussian integral
  \begin{equation*}
    \int_{\reals^n} e^{-x^T X x}dx\quad=\quad\pi^{n/2}\det(X)^{-1/2}.
  \end{equation*}
  Define the map $f_i := \frac{1}{\pi^{n/4}}e^{-x^T X_i x} \in L_2(\reals^n)$, where the inner-product is given by
  \begin{equation*}
    \ip{f_i}{f_j} := \frac{1}{\pi^{n/2}}\int_{\reals^n} e^{-x^T(X_i+X_j)x}dx = \det(X_i+X_j)^{-1/2}.
  \end{equation*}
  Thus, it follows that $H_{1/2} \ge 0$. The Schur product theorem says that the elementwise product of two positive matrices is again positive. So, in particular $H_\beta$ is positive whenever $\beta$ is an integer multiple of $1/2$. To extend the result to all $\beta$ covered
  by~\eqref{eq.62}, we invoke another integral representation: the
  \emph{multivariate Gamma function}, defined as~\cite[\S2.1.2]{muirhead}
  \begin{equation}
    \label{eq.48}
    \Gamma_n(\beta) := \int_{\pd_n} e^{-\trace(A)}\det(A)^{\beta-(n+1)/2}dA,\qquad\text{where}\ \beta > \half(n-1).
  \end{equation}
  Let $f_i := ce^{-\trace(AX_i)} \in L_2(\pd_n)$, for some constant $c$; then, compute the inner product
  \begin{equation*}
    \ip{f_i}{f_j} := c'\int_{\pd_n}e^{-\trace(A(X_i+X_j))}\det(A)^{\beta-(n+1)/2}dA = \det(X_i+X_j)^{-\beta},
  \end{equation*}
  which exists if $\beta > \half(n-1)$. Thus, $H_\beta \ge 0$ for all $\beta$ defined by~\eqref{eq.62}.

  The converse is a deeper result grounded in the theory of symmetric cones. Specifically, since $\pd_n$ is a symmetric cone, and $1/\det(X)$ is decreasing on this cone, an appeal to \citep[Thm.~VII.3.1]{koryan} yields the only if part of our claim.
\end{proof}



\begin{remark}
  \label{rmk.commute}
  Let $\Xc$ be a set of HPD matrices that commute with each other. Then, $(\Xc,\ssd)$ can be isometrically embedded into a Hilbert space. This claim follows because a commuting set of matrices can be simultaneously diagonalized, and for diagonal matrices, $\ssd^2(X,Y) = \sum_i \smet^2(X_{ii},Y_{ii})$, which is a nonnegative sum of negative definite kernels and is therefore itself negative definite.
\end{remark}

Theorem~\ref{thm.wallach} shows that $e^{-\beta\ssd^2}$ is not a kernel for all $\beta > 0$, while Remark~\ref{rmk.commute} mentions an extreme case for which $e^{-\beta\ssd^2}$ is always a positive definte kernel. This prompts us to pose the following problem.

\vspace*{8pt}
\begin{center}
  \begin{minipage}{.9\linewidth}
    \begin{minipage}{1.0\linewidth}
      \textbf{Open problem 1.} Determine necessary and sufficient conditions on a set $\Xc \subset
      \pd_n$, so that $e^{-\beta\ssd^2(X,Y)}$ is a kernel function on
      $\Xc \times \Xc$ for all $\beta > 0$.
    \end{minipage}
  \end{minipage}
\end{center}
\vspace*{8pt}

\section{Connections with $\riem$}
\label{sec.riem}
This section returns to our original motivation: using $\jsld$ as an alternative to the Riemannian metric $\riem$. In particular, in this section we show a sequence of results that highlight similarities between $\jsld$ and $\riem$. Table~\ref{tab.summ} lists these to provide a quick summary. Thereafter, we develop the details.

\begin{table}[h]\small
  \hskip-12pt
  \begin{tabular}{l|@{\hspace*{3pt}}l||l|@{\hspace*{3pt}}l}
    Riemannian distance & Ref. & \sdiv & Ref.\\
    \hline
    $\riem(X^*AX,X^*BX)= \riem(A,B)$          &\citep[Ch.6]{bhatia07}     & $\ds(X^*AX,X^*BX)= \ds(A,B)$          & Prp.\ref{prop.basic}\\
    $\riem(\inv{A},\inv{B}) = \riem(A,B)$     &\citep[Ch.6]{bhatia07}     & $\ds(\inv{A},\inv{B}) = \ds(A,B)$     & Prp.\ref{prop.basic}\\
    $\riem(A\kron B, A\kron C) = n\riem(B,C)$ & E                          & $\ds^2(A\kron B, A\kron C) = n\ds^2(B,C)$ & Prp.\ref{prop.basic}\\[4pt]
    \hline
    $\riem(A^t,B^t) \le t\riem(A,B)$           & \citep[Ex.6.5.4]{bhatia07}  & $\ds^2(A^t,B^t)  \le t\ds^2(A,B)$         & Th.\ref{thm.contract}\\
    $\riem(A^s,B^s) \le (s/u)\riem(A^u,B^u)$   & Th.\ref{thm.tucontract}   & $\ds^2(A^s,B^s) \le (s/u)\ds^2(A^u,B^u)$  & Th.\ref{thm.tucontract}\\
    $\riem(X,A)$ g-convex in $X$               & \citep[6.1.11]{bhatia07}          & $\ds^2(X,Y)$ g-convex in $X, Y$ & Th.\ref{thm.jointgc}\\
    $\riem(X^*AX,X^*BX) \le\riem(A,B)$         & Cor.~\ref{corr.riem.compress} & $\ds(X^*AX,X^*BX) \le \ds(X,Y)$       & Th.\ref{thm.compr}\\[4pt]
    \hline
    $\riem(A,A\gm B) = \riem(B,A\gm B)$        & E                          & $\ds(A,A\gm B) = \ds(B,A\gm B)$       & Th.\ref{thm.gm}\\
    $\riem(A,A\gm_t B) = t\riem(A,B)$          &\citep[Th.6.1.6]{bhatia07} & $\ds^2(A,A\gm_t B) \le t\ds^2(A,B)$       & Th.\ref{thm.contract3}\\
    $\riem(A\gm_t B, A\gm_t C) \le t \riem(B,C)$ & \citep[Th.6.1.2]{bhatia07}& $\ds^2(A\gm_t B, A\gm_t C) \le t\ds^2(B,C)$ & Th.\ref{thm.cancel}\\[1pt]
    $\min_X\riem^2(X,A)+\riem^2(X,B)$         & \citep[\S6.2.8]{bhatia07}    & $\min_X\ssd^2(X,A)+\ssd^2(X,B)$          & Th.\ref{thm.gm}\\[4pt]
    \hline
    $\riem(A+X,A+Y) \le \riem(X,Y)$           & \citep{bougerol}            & $\ds^2(A+X, A+Y) \le \ds^2(X,Y)$        & Th.\ref{thm.contract2}
  \end{tabular}
  \caption{\small Similarities between $\riem$ and $\ds$,$\ds^2$ at a glance. All matrices are assumed to be in $\pd_n$, except for $X$ in Line 1, $X \in \text{GL}_n(\C)$, and in Line 7, $X \in \C^{n\times k}$ ($k \le n$, full colrank). The scalars $t, s, u$ satisfy $0 < t \le 1$, $1\le s\le u <\infty$. An `E' indicates an easily verifiable result.}\label{tab.summ}
\end{table}

\subsection{Geometric mean}
\label{sec.gm}
We begin by studying an object that connects $\riem$ and $\jsld$ most intimately: the matrix \emph{geometric mean} (GM). For HPD matrices $A$ and $B$, the GM is denoted by $A \gm B$, and is given by the formula
\begin{equation}
  \label{eq.47}
  A \gm B := A^{1/2}(A^{-1/2}BA^{-1/2})^{1/2}A^{1/2}.
\end{equation}
The GM~\eqref{eq.47} has numerous attractive properties---see for instance~\cite{ando79}---among which, the following variational characterization is very important~\cite{bhatiaholb}:
\begin{equation}
  \label{eq.123}
  \begin{split}
    A \gm B = &\argmin\nolimits_{X > 0}\quad \riem^2(A, X) + \riem^2(B, X),\quad\text{and}\\
    &\riem(A, A \gm B) = \riem(B, A \gm B).
  \end{split}
\end{equation}
Surprisingly, the GM enjoys a similar characterization even with $\ssd^2$.
\begin{theorem}
  \label{thm.gm}
  Let $A, B > 0$. Then, 
  \begin{equation}
    \label{eq.111}
    A \gm B = \argmin\nolimits_{X > 0}\quad \left[h(X) := \ssd^2(X, A) + \ssd^2(X, B)\right].
  \end{equation}
  Moreover, $A \gm B$ is equidistant from $A$ and $B$, i.e., $\ssd(A, A \gm B) = \ssd(B, A \gm B)$.
\end{theorem}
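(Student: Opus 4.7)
The plan is to treat the two claims separately, with the main work going into the variational characterization. I first compute the gradient of $h$ and identify its critical points, then argue that the unique critical point must be the global minimizer via a coercivity argument.

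Expanding via~\eqref{eq.ss},
\[
  h(X) = \log\det\tfrac{X+A}{2} + \log\det\tfrac{X+B}{2} - \log\det X - \tfrac{1}{2}\log\det(AB).
\]
Using the standard identity $\nabla \log\det X = X^{-1}$,
\[
  \nabla h(X) = (X+A)^{-1} + (X+B)^{-1} - X^{-1}.
\]
Setting this to zero gives the stationarity condition $(X+A)^{-1} + (X+B)^{-1} = X^{-1}$. Rewriting $X^{-1} - (X+A)^{-1} = X^{-1}A(X+A)^{-1}$, equating with $(X+B)^{-1}$, and taking inverses of both sides yields the algebraic Riccati equation $XA^{-1}X = B$. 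Conjugating by $A^{-1/2}$ reduces this to $(A^{-1/2}XA^{-1/2})^2 = A^{-1/2}BA^{-1/2}$, whose unique positive definite solution is $X = A^{1/2}(A^{-1/2}BA^{-1/2})^{1/2}A^{1/2} = A\gm B$. Hence $A\gm B$ is the only critical point of $h$ on $\pd_n$.

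To promote this stationary point to the global minimizer, I would establish coercivity: $h(X) \to \infty$ as $X$ approaches the topological boundary of the open cone $\pd_n$. Using congruence invariance (Corollary~\ref{corr.cong}), reduce to $A = I$ via the change of variables $Y = A^{-1/2}XA^{-1/2}$; diagonalizing $Y$ gives $\ssd^2(Y,I) = \sum_i \log\frac{y_i+1}{2\sqrt{y_i}}$, and each summand blows up whenever $y_i \to 0$ or $y_i \to \infty$. Since $\ssd^2(Y, \tilde B) \ge 0$ and $Y$ approaches the boundary iff $X$ does, $h$ is coercive on the open cone. Being smooth and coercive with a unique critical point, that critical point must be the global minimum.

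For the equidistance claim, the cleanest route is again via congruence reduction. By Corollary~\ref{corr.cong},
\[
  \ssd^2(A, A\gm B) = \ssd^2(I, \tilde B^{1/2}), \qquad \ssd^2(B, A\gm B) = \ssd^2(\tilde B, \tilde B^{1/2}),
\]
where $\tilde B := A^{-1/2}BA^{-1/2}$. A short determinant computation, using $\tilde B + \tilde B^{1/2} = \tilde B^{1/2}(I+\tilde B^{1/2})$, shows that both quantities equal $\log\det\tfrac{I+\tilde B^{1/2}}{2} - \tfrac12\log\det \tilde B^{1/2}$. The main obstacle is the global-optimality step: $h$ is not convex on $\pd_n$ (cf.~Prop.~\ref{prop.scvx}), so the oversight noted in the discussion of~\cite{chebbi}---concluding global optimality directly from stationarity---cannot be repaired by convexity. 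The resolution here instead combines uniqueness of the Riccati solution with coercivity of $h$.
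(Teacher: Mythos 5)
Your proposal is correct, and its first half (expanding $h$, computing $\nabla h(X)=(X+A)^{-1}+(X+B)^{-1}-X^{-1}$, reducing the stationarity condition to the Riccati equation $XA^{-1}X=B$, and identifying its unique positive definite solution as $A\gm B$) coincides with the paper's argument. Where you genuinely diverge is in promoting the unique stationary point to the global minimizer. The paper verifies that the Hessian at the stationary point is positive definite---writing $P=(X+A)^{-1}$, $Q=(X+B)^{-1}$ and using $X^{-1}=P+Q$ to get $2\nabla^2h(X)=(P+Q)\kron(P+Q)-P\kron P-Q\kron Q=(Q\kron P)+(P\kron Q)>0$---and then declares global optimality ``immediate from uniqueness.'' You instead prove coercivity of $h$ on the open cone (after congruence reduction, each eigenvalue contributes $\log\tfrac{y+1}{2\sqrt{y}}$, which blows up as $y\to 0$ or $y\to\infty$) and combine it with uniqueness of the critical point. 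Your route is, if anything, the more watertight: on a noncompact open set, a unique critical point that is a strict local minimum need not be a global minimum unless one also knows the infimum is attained, and that attainment is exactly what your coercivity estimate supplies (note it correctly covers both degenerations, $X$ becoming singular and $\|X\|\to\infty$); the paper's final step implicitly relies on such an argument. Your equidistance computation---congruence reduction to $\jsld(I,\tilde B^{1/2})$ versus $\jsld(\tilde B,\tilde B^{1/2})$ with $\tilde B=A^{-1/2}BA^{-1/2}$, then the factorization $\tilde B+\tilde B^{1/2}=\tilde B^{1/2}(I+\tilde B^{1/2})$---is correct and essentially equivalent to the paper's, which instead uses $A\gm B=B\gm A$ and two congruences by $B^{\pm 1/2}$.
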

\begin{proof}
  If $A=B$, then clearly $X=A$ minimizes $h(X)$. Assume therefore, that $A \neq B$. Ignoring the constraint $X > 0$ for the moment, we see that any stationary point of $h(X)$ must satisfy $\nabla h(X) = 0$. This condition translates into
  \begin{align}
    \nonumber
    \nabla h(X) = &\left(\tfrac{X+A}{2}\right)^{-1}\tfrac{1}{2} + \left(\tfrac{X+B}{2}\right)^{-1}\tfrac{1}{2} - X^{-1} = 0,\\
    \label{eq.33}
    &\implies\qquad X^{-1} = (X+A)^{-1}+(X+B)^{-1}\\
    \nonumber
    &\implies\qquad (X+A)X^{-1}(X+B) = 2X + A + B\\
    \nonumber
    &\implies\qquad B = XA^{-1}X.
  \end{align}
  The last equation is a Riccati equation whose \emph{unique, positive definite} solution is the geometric mean $X=A \gm B$ (see~\cite[Prop~1.2.13]{bhatia07}).

  Next, we show that this stationary point is a local minimum, not a local
  maximum or saddle point. To that end, we show that the Hessian is positive definite at the stationary point $X=A\gm B$. The Hessian of $h(X)$ is given by
  \begin{equation*}
    2\nabla^2h(X) = X^{-1} \kron X^{-1} - \left[(X+A)^{-1} \kron (X+A)^{-1} + (X+B)^{-1}\kron (X+B)^{-1} \right].
  \end{equation*}
  Writing $P=(X+A)^{-1}$, and $Q=(X+B)^{-1}$, upon using~\eqref{eq.33} we obtain
  \begin{equation*}
    \begin{split}
      2\nabla^2h(X) &= (P+Q) \kron (P+Q) - P \kron P - Q \kron Q\\
      &= (P+Q)\kron P + (P+Q)\kron Q - P \kron P - Q \kron Q\\
      &= (Q\kron P) + (P \kron Q) > 0.
    \end{split}
  \end{equation*}
  Thus, $X = A \gm B$ is a \emph{strict} local minimum of~\eqref{eq.11}. This local minimum is actually global as $\nabla h(X)=0$ has a unique positive solution and $h$ goes to $+\infty$ at the boundary.

  To prove the equidistance, recall that $A \gm B = B \gm A$; then observe that
  \begin{align*}
    \jsld(A, A \gm B) &= \jsld(A, B \gm A) = 
    \jsld(A, B^{1/2}(B^{-1/2}AB^{-1/2})^{1/2}B^{1/2})\\
    &= \jsld(B^{-1/2}AB^{-1/2}, (B^{-1/2}AB^{-1/2})^{1/2})\\
    &= \jsld(B^{1/2}(B^{-1/2}AB^{-1/2})^{1/2}B^{1/2}, B)\\ 
    &= \jsld(B \gm A, B) = \jsld(B, A \gm B).\hskip2in\qed
  \end{align*}
\end{proof}

\subsection{Geodesic convexity}
The above derivation concludes optimality from first principles. It was originally driven by the fact that $\ssd$ is not geodesically convex~\citep{srapd}. However, we recently realized that the square $\ssd^2$ \emph{is} actually geodesically convex. This realization leads to more insightful proof of uniqueness and optimality of the S-mean. 

In fact even more is true: Theorem~\ref{thm.jointgc} shows that $\ssd^2(X,Y)$ is not only geodesically convex, it is \emph{jointly} geodesically convex. 
Before proving Theorem~\ref{thm.jointgc}, we recall two useful results; the first immediately implies the second\footnote{It is a minor curiosity to note that \citet[Thm.~2]{mond96} proved a mixed-mean inequality for the matrix geometric and arithmetic means; Prop.~\ref{prop.mixedmean} includes their result as a special case.}.

\begin{theorem}[\!\!\protect{\cite{kuboAndo80}}]
  \label{thm.andomax}
  The GM of $A, B \in \pd_n$ is given by the variational formula
  \begin{equation*}
    \label{eq.31}
    A\gm B = \max\bigl\lbrace X \in \H_n \mid
    \mbox{\scriptsize $\begin{bmatrix}
        A & X\\
        X & B
      \end{bmatrix}$} \ge 0\bigr\rbrace.
  \end{equation*}
\end{theorem}

\begin{proposition}[Joint-concavity (see e.g.~\cite{kuboAndo80})]
  \label{prop.mixedmean}
  Let $A,B,C,D > 0$. Then,
  \begin{equation}
    \label{eq.30}
    (A\gm B)+(C\gm D) \le (A+C)\gm(B+D).
  \end{equation}
\end{proposition}
\begin{proof}
  From one application of Thm.~\ref{thm.andomax} we have
  \begin{equation*}
    0 \preceq \begin{bmatrix}
      A & A\gm B\\
      A\gm B & B
    \end{bmatrix} +
    \begin{bmatrix}
      C & C\gm D\\
      C\gm D & D
    \end{bmatrix} =
    \begin{bmatrix}
      A+C & A\gm B + C\gm D\\
      A\gm B + C\gm D & B+D
    \end{bmatrix}.
  \end{equation*}
  A second application of Thm.~\ref{thm.andomax} then immediately yields~\eqref{eq.30}.
\end{proof}

\begin{theorem}
  \label{thm.jointgc}
  The function $\ssd^2(X,Y)$ is jointly g-convex for $X, Y > 0$.
\end{theorem}
\begin{proof}
  Since $\ssd^2$ is continuous, it suffices to show that for $X_1,X_2,Y_1,Y_2 > 0$,
  \begin{equation}
    \label{eq.sgc}
    \ssd^2(X_1\gm X_2, Y_1\gm Y_2) \le \half\ssd^2(X_1,Y_1) + \half\ssd^2(X_2,Y_2).
  \end{equation}
  From Prop.~\ref{prop.mixedmean} it follows that
  \begin{equation*}
    X_1\gm X_2 + Y_1\gm Y_2 \le (X_1+Y_1)\gm(X_2+Y_2).
  \end{equation*}
  Since $\log\det$ is monotonic and determinant is multiplicative, this inequality implies
  \begin{equation*}
    \begin{split}
      \log\det\left(\tfrac{X_1\gm X_2 + Y_1\gm Y_2}{2}\right) &\le \log\det\left(\tfrac{(X_1+Y_1)\gm(X_2+Y_2)}{2}\right)\\
      &=\half\log\det\left(\tfrac{X_1+Y_1}{2}\right) + \half\log\det\left(\tfrac{X_2+Y_2}{2}\right).
    \end{split}
  \end{equation*}
  Combining this inequality with the identity
  \begin{equation}
    \label{eq.38}
    -\half\log\det\bigl((X_1\gm X_2)(Y_1\gm Y_2)\bigr) = -\tfrac{1}{4}\log\det(X_1Y_1) - \tfrac{1}{4}\log\det(X_2Y_2)
  \end{equation}
  we obtain inequality~\eqref{eq.sgc}, establishing the joint g-convexity.\qed
\end{proof}

Since $\ds^2$ is geodesically convex (to be precise, we define $\ds$ to be $+\infty$ whenever either of its arguments fails to be strictly positive), the objective function $h(X) = \nlsum_i w_i\ds^2(X,A_i)$ is also geodesically convex. A quick observation shows that $h(0)=h(\infty)=+\infty$, which suggests that $h$ attains its minimum. Since it is strictly geodesically convex, the solution to $\nabla h(X)=0$ is unique, and yields the desired minimum.

\subsection{Basic contraction results}
\label{sec.cont1}
In this section we show that $\ds$ and $\riem$ share several contraction properties. We will state properties either in terms of $\ds^2$ or $\ds$, depending on whichever appears more elegant.

\subsubsection{Power-contraction}
The metric $\riem$ satisfies (e.g.,~\cite[Exercise~6.5.4]{bhatia07})
\begin{equation}
  \label{eq.44}
  \riem(A^t,B^t) \le t\riem(A,B),\quad\text{for}\quad A, B > 0\ \text{and}\ t \in [0,1].
\end{equation}
Theorem~\ref{thm.contract} shows that \sdiv satisfies the same relation.
\begin{theorem}
  \label{thm.contract}
  Let $A, B > 0$, and let $t \in [0,1]$. Then,
  \begin{equation}
    \label{eq.45}
    \ds^2(A^t,B^t) \le t\ds^2(A,B). 
  \end{equation}
  Moreover, if $t\ge 1$, then the inequality gets reversed.
\end{theorem}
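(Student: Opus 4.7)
The plan is to unpack~\eqref{eq.45} and exploit a cancellation. Since $\det(A^tB^t)=\det(A)^t\det(B)^t=\det(AB)^t$, the two copies of $\log\det(AB)$ in $\jsld(A^t,B^t)$ and $t\jsld(A,B)$ cancel, and expanding leaves
\[
t\jsld(A,B)-\jsld(A^t,B^t) \;=\; t\log\det\!\left(\tfrac{A+B}{2}\right)\;-\;\log\det\!\left(\tfrac{A^t+B^t}{2}\right).
\]
Hence proving~\eqref{eq.45} is equivalent to proving the purely determinantal bound
\begin{equation*}
\det\!\left(\frac{A^t+B^t}{2}\right) \;\le\; \det\!\left(\frac{A+B}{2}\right)^{\!t},\qquad t\in[0,1]. \tag{$\star$}
\end{equation*}

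To establish $(\star)$ I would invoke the L\"owner--Heinz theorem, which says that for $t\in[0,1]$ the map $X\mapsto X^{t}$ is operator concave on $\pd_n$. Specialising operator concavity at the midpoint of $A$ and $B$ yields the L\"owner inequality
\[
\left(\frac{A+B}{2}\right)^{\!t}\;\ge\;\frac{A^t+B^t}{2}.
\]
Combining this with two elementary facts---the monotonicity of $\det$ on $\pd_n$ (because $X\ge Y>0\Rightarrow\det(Y^{-1/2}XY^{-1/2})\ge 1$) and the identity $\det(X^{t})=\det(X)^{t}$ for $X>0$---gives $(\star)$ at once:
\[
\det\!\left(\frac{A+B}{2}\right)^{\!t} \;=\; \det\!\left(\left(\frac{A+B}{2}\right)^{\!t}\right) \;\ge\; \det\!\left(\frac{A^t+B^t}{2}\right).
\]
This finishes the case $t\in[0,1]$.

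For the reversed inequality when $t\ge 1$, the plan is a short self-bootstrap: apply the already-proved case $(\star)$ with the substitutions $A\leftarrow A^{t}$, $B\leftarrow B^{t}$ and exponent $s:=1/t\in(0,1]$. This produces $\det\!\bigl(\tfrac{A+B}{2}\bigr)\le\det\!\bigl(\tfrac{A^t+B^t}{2}\bigr)^{1/t}$, and raising both (positive) sides to the power $t\ge 1$ reverses $(\star)$, which in turn reverses~\eqref{eq.45}. The only non-elementary ingredient in the whole argument is the operator concavity of $X\mapsto X^{t}$ on $[0,1]$; everything else is routine arithmetic, so the main ``obstacle'' is really just recognising L\"owner--Heinz as the right lever to pull.
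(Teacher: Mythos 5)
Your proposal is correct and follows essentially the same route as the paper: both reduce the claim to the determinantal inequality $\det\bigl(\tfrac{A^t+B^t}{2}\bigr)\le\det\bigl(\tfrac{A+B}{2}\bigr)^{t}$ via the cancellation $\det(A^tB^t)=\det(AB)^t$, and both establish it from the operator concavity of $X\mapsto X^t$ on $[0,1]$ together with monotonicity of the determinant, handling $t\ge 1$ by the same substitution $A\leftarrow A^t$, $B\leftarrow B^t$ with exponent $1/t$.
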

\begin{proof}
  Recall that for $t \in [0,1]$, the map $X \mapsto X^t$ is operator concave. Thus, $\half(A^t+B^t) \le \left(\frac{A+B}{2}\right)^t$; by monotonicity of the determinant it then follows that
  \begin{equation*}
    \ds^2(A^t,B^t) = \log \frac{\det\left(\half(A^t+B^t)\right)}{\det(A^tB^t)^{1/2}} \le 
    \log \frac{\det\left(\half(A+B)\right)^t}{\det(AB)^{t/2}} = t\ds^2(A,B).
  \end{equation*}
  The reverse inequality for $t \ge 1$,
  follows from~\eqref{eq.45} by considering $\ds^2(A^{1/t},B^{1/t})$.\qed
\end{proof}

\subsubsection{Contraction on geodesics}
\label{sec.geo}
The curve
\begin{equation}
  \label{eq.46}
  \gamma(t) := A^{1/2}(A^{-1/2}BA^{-1/2})^tA^{1/2},\quad\text{for}\ t \in [0,1],
\end{equation}
parameterizes the \emph{unique} geodesic between the positive matrices $A$ and $B$ on the manifold $(\pd_n,\riem)$~\cite[Thm.~6.1.6]{bhatia07}. On this curve $\riem$ satisfies 
\begin{equation*}
  \riem(A, \gamma(t)) = t\riem(A,B),\quad t \in [0,1].
\end{equation*}
The \sdiv satisfies a similar, albeit slightly weaker result.
\begin{theorem}
  \label{thm.contract3}
  Let $A, B > 0$, and $\gamma(t)$ be defined by~\eqref{eq.46}. Then,
  \begin{equation}
    \label{eq.10}
    \ds^2(A, \gamma(t)) \le t\ds^2(A,B),\qquad 0 \le t \le 1.
  \end{equation}
\end{theorem}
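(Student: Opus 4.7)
My plan is to reduce the claim to the power-contraction inequality of Theorem~\ref{thm.contract} by exploiting the congruence invariance of $\jsld$ (Corollary~\ref{corr.cong}).

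The first step is to normalize so that one of the arguments becomes the identity. Apply the congruence $X = A^{-1/2}$ to both sides of $\jsld(A,\gamma(t))$: since $A^{-1/2} A A^{-1/2}=I$ and
\[
A^{-1/2}\gamma(t)A^{-1/2} = A^{-1/2}\bigl[A^{1/2}(A^{-1/2}BA^{-1/2})^tA^{1/2}\bigr]A^{-1/2} = (A^{-1/2}BA^{-1/2})^t,
\]
Corollary~\ref{corr.cong} gives $\jsld(A,\gamma(t)) = \jsld\bigl(I,(A^{-1/2}BA^{-1/2})^t\bigr)$. Writing $C := A^{-1/2}BA^{-1/2}$ (which is positive definite), the same corollary yields $\jsld(A,B) = \jsld(I,C)$, so the desired inequality \eqref{eq.10} becomes
\[
\jsld(I, C^t) \le t\,\jsld(I, C),\qquad 0 \le t \le 1.
\]

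The second step is to observe that $I^t = I$ for every $t$, so this last inequality is precisely the power-contraction inequality of Theorem~\ref{thm.contract} applied to the pair $(I,C)$. That theorem is available to us, so the proof is complete.

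There is no real obstacle: the entire content of the theorem is absorbed into (i) the observation that the geodesic $\gamma(t)$ is exactly the image of $C^t$ under the congruence $X\mapsto A^{1/2}XA^{1/2}$, and (ii) the already-established power contraction. It is worth noting that the slight weakening relative to the Riemannian analogue (inequality rather than equality) is inherited directly from Theorem~\ref{thm.contract}, which is itself a consequence of operator concavity of $X\mapsto X^t$ for $t\in[0,1]$ rather than an isometric property.
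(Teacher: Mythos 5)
Your proposal is correct and is essentially identical to the paper's own proof: both reduce $\jsld(A,\gamma(t))$ to $\jsld(I,(A^{-1/2}BA^{-1/2})^t)$ via congruence invariance and then invoke the power-contraction inequality of Theorem~\ref{thm.contract}. No issues.
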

\begin{proof} The proof follows upon observing that
  \begin{align*}
    \ds^2(A,\gamma(t)) &=\ds^2(I, (A^{-1/2}BA^{-1/2})^t)\\ 
    &\hskip-2pt\stackrel{~(\ref{eq.45})}{\le} t\ds^2(I, A^{-1/2}BA^{-1/2}) =t\ds^2(A,B).~\hskip1cm\qed
  \end{align*}
\end{proof}

\begin{figure}[t]
  \centering
  \includegraphics[width=.33\linewidth]{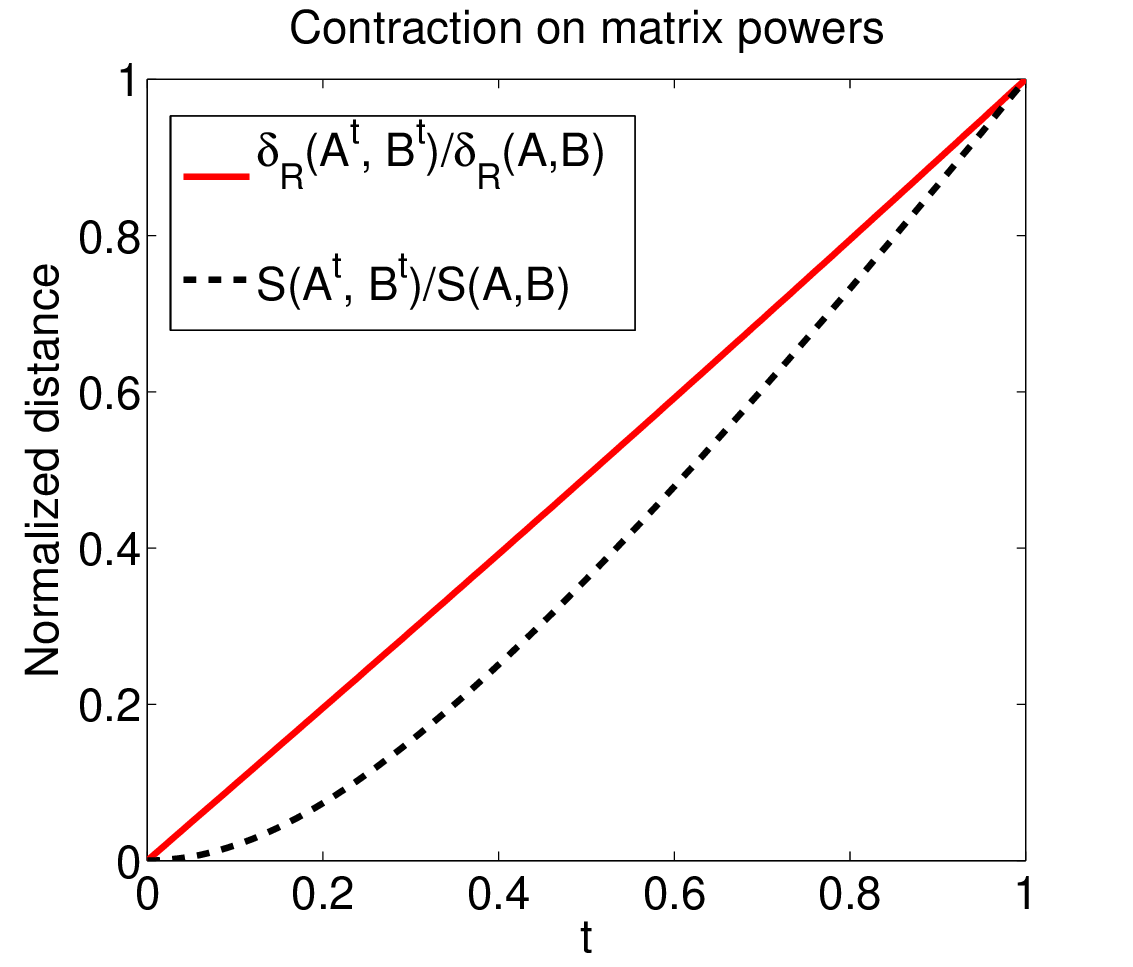}\hskip-8pt
  \includegraphics[width=.33\linewidth]{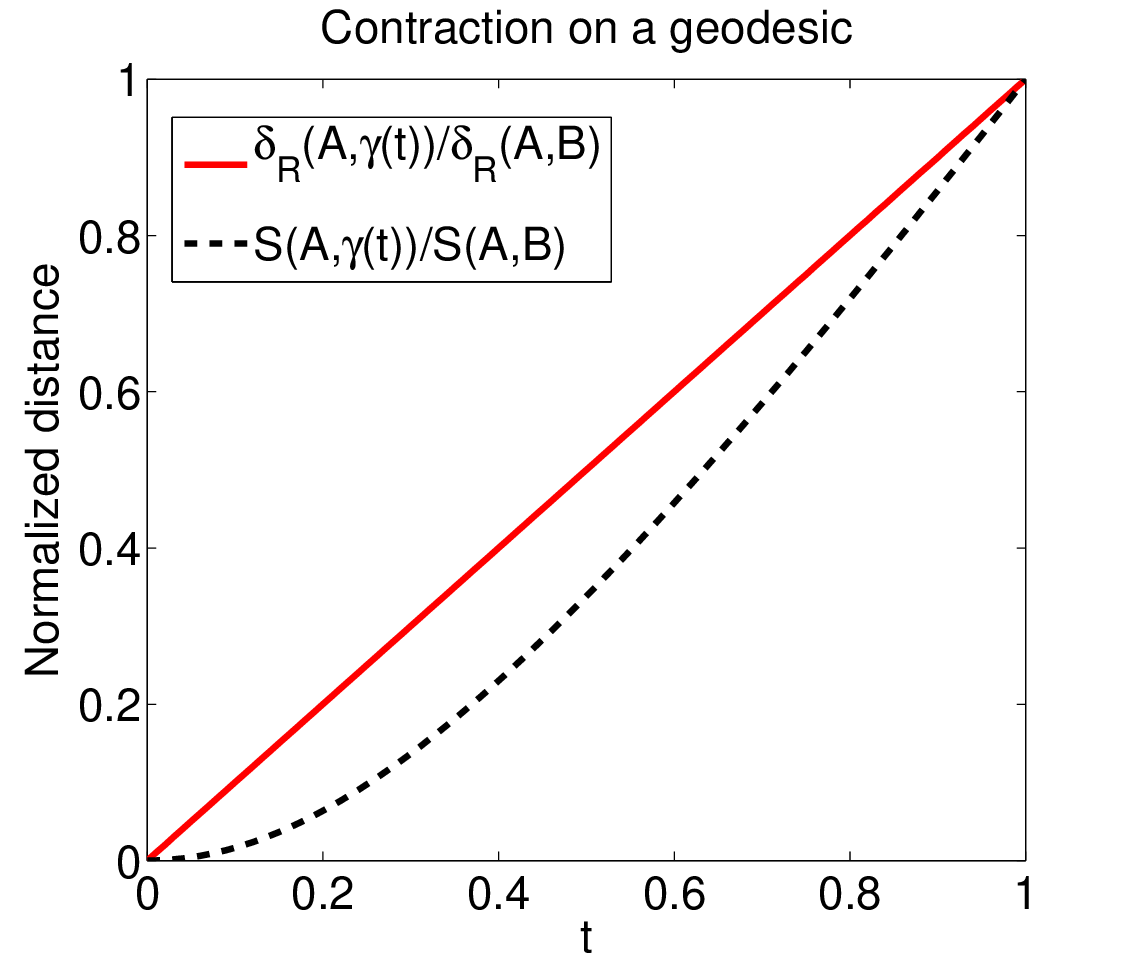}\hskip-8pt \includegraphics[width=.33\linewidth]{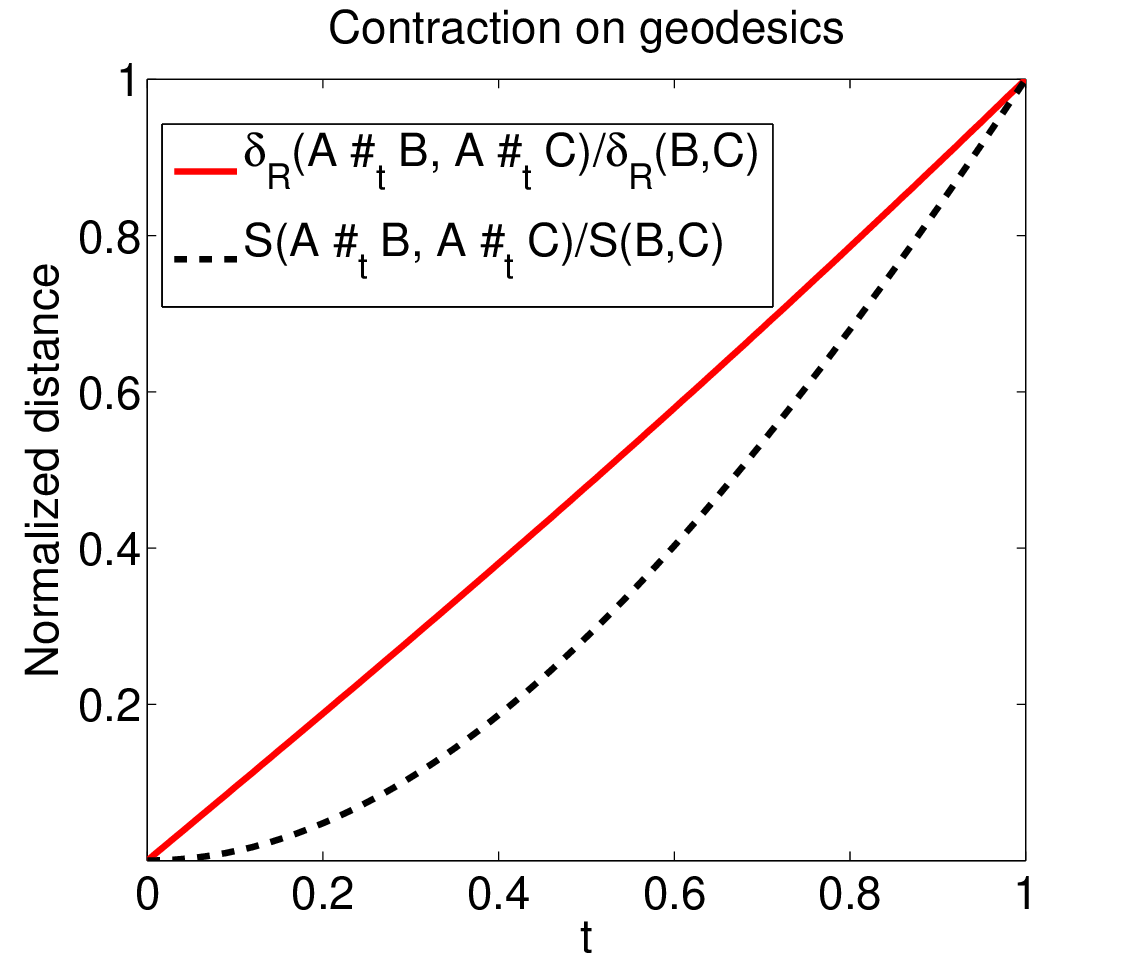}
  \caption{\textsf{Left to right:} Illustration of Theorems~\ref{thm.contract}, \ref{thm.contract3} and \ref{thm.cancel}. As expected, the \sdiv (denoted $S$ in the plots) exhibits slightly stronger contraction than $\riem$. Interestingly, the curves for $\riem$ are almost straight lines even for theorems~\ref{thm.contract} and~\ref{thm.cancel}, while those of $S$ have a more complicated shape; empirically, for random $A$ and $B$, the curves for $S$ are fit fairly well using a cubic in $t$.}
  \label{fig.pow}
\end{figure}

The GM $A \gm B$ is the midpoint $\gamma(1/2)$ on the curve~\eqref{eq.46}; an arbitrary point on this geodesic is therefore, frequently written as
\begin{equation}
  \label{eq.50}
  A \gm_t B := A^{1/2}(A^{-1/2}BA^{-1/2})^tA^{1/2}\quad\text{for}\ t \in [0,1].
\end{equation}
On this geodesics $\riem$ satisfies the following ``cancellation'' inequality~\cite[Thm.~6.1.12]{bhatia07}:
\begin{equation}
  \label{eq.49}
  \riem(A \gm_t B, A \gm_t C) \le t\riem(B, C)\quad\text{for}\ A, B, C > 0,\quad\text{and}\ t \in [0,1].
\end{equation}
We show that a similar inequality holds for $S$.

\begin{theorem}
  \label{thm.cancel}
  Let $A, B, C > 0$, and $t \in [0,1]$. Then, 
  \begin{equation}
    \label{eq.51}
    \ds^2(A \gm_t B, A \gm_t C) \le t\ds^2(B,C),\qquad t \in [0,1].
  \end{equation}
\end{theorem}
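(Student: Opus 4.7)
The plan is to reduce this cancellation-on-the-geodesic inequality to the power-contraction theorem (Theorem~\ref{thm.contract}) by means of congruence invariance (Corollary~\ref{corr.cong}). Nothing more delicate should be required: once the $A^{1/2}$ factors in $A \gm_t B$ are stripped off, we are exactly in the setting of Theorem~\ref{thm.contract}.

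First I would apply Corollary~\ref{corr.cong} with $X = A^{-1/2}$ to the left-hand side. Writing $M := A^{-1/2}BA^{-1/2}$ and $N := A^{-1/2}CA^{-1/2}$, the definition~\eqref{eq.50} gives $A \gm_t B = A^{1/2}M^t A^{1/2}$ and $A \gm_t C = A^{1/2}N^t A^{1/2}$, so congruence invariance yields
\begin{equation*}
  \jsld(A \gm_t B,\, A \gm_t C) \;=\; \jsld(M^t,\, N^t).
\end{equation*}

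Next I would invoke Theorem~\ref{thm.contract} on the pair $(M,N) \in \pd_n \times \pd_n$ at exponent $t \in [0,1]$ to obtain $\jsld(M^t, N^t) \le t\, \jsld(M, N)$. Finally, applying Corollary~\ref{corr.cong} once more (again with $X = A^{-1/2}$, now in the reverse direction) gives $\jsld(M,N) = \jsld(B,C)$, and chaining the three displays produces~\eqref{eq.51}.

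There is really no main obstacle here, since all heavy lifting has already been done: congruence invariance is Lemma~\ref{lem.basic}(ii)/Corollary~\ref{corr.cong}, and the contraction of $\jsld$ under the operator-concave map $X \mapsto X^t$ is Theorem~\ref{thm.contract}. The only thing to be careful about is to apply congruence invariance on the correct side, i.e.\ to conjugate by $A^{-1/2}$ (not $A^{1/2}$), so that the $A^{1/2}$-sandwich in~\eqref{eq.50} is peeled off and one is left with the clean expression $\jsld(M^t,N^t)$ to which Theorem~\ref{thm.contract} directly applies.
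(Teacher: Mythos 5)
Your proposal is correct and follows exactly the paper's own argument: strip the $A^{1/2}$ factors via congruence invariance (Lemma~\ref{lem.basic}/Corollary~\ref{corr.cong}), apply Theorem~\ref{thm.contract} to $M^t$ and $N^t$, and undo the congruence to recover $\jsld(B,C)$. No differences to report.
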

\begin{proof}
  Prop.~\ref{prop.basic} and Theorem~\ref{thm.contract} help prove this claim as follows:
  \begin{align*}
    \ds^2( A \gm_t B, A \gm_t C) &=
    \ds^2(A^{1/2}(A^{-1/2}BA^{-1/2})^tA^{1/2},A^{1/2}(A^{-1/2}CA^{-1/2})^tA^{1/2})\\
    &= \ds^2((A^{-1/2}BA^{-1/2})^t, (A^{-1/2}CA^{-1/2})^t)\\
    &\stackrel{\text{Thm.~\ref{thm.contract}}}{\le} t\ds^2(A^{-1/2}BA^{-1/2},A^{-1/2}CA^{-1/2}) = t\ds^2(B,C).\qed
  \end{align*}
\end{proof}

\subsubsection{A power-monotonicity property}
Above we saw that $\ds^2$ and $\riem$ show similar contractive behavior. Now we show that on matrix powers, they exhibit a similar monotonicity property (akin to a power-means inequality).
\begin{theorem}
  \label{thm.tucontract}
  Let $A, B > 0$. Let scalars $t$ and $u$ satisfy $1 \le t \le u < \infty$. Then,
  \begin{align}
    \label{eq.23}
    \inv{t}\riem(A^t,B^t) &\le \inv{u}\riem(A^u,B^u)\\
    \label{eq.21}
    \inv{t}\ssd^2(A^t,B^t) &\le \inv{u}\ssd^2(A^u,B^u).
  \end{align}
\end{theorem}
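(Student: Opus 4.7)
The plan is to reduce both inequalities~(\ref{eq.23}) and~(\ref{eq.21}) directly to the power-contraction results already established, namely inequality~(\ref{eq.44}) for $\riem$ and Theorem~\ref{thm.contract} for $\jsld = \ssd^2$. Read as a monotonicity statement, the theorem says that the maps $t \mapsto \riem(A^t,B^t)/t$ and $t \mapsto \ssd^2(A^t,B^t)/t$ are nondecreasing on $[1,\infty)$; this is precisely the natural extension beyond $t=1$ of the contractions, which on $[0,1]$ bound those same maps above by their values at $1$.

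The concrete trick I would use is the substitution $X := A^u$, $Y := B^u$, and $r := t/u$. Since $1 \le t \le u < \infty$ we have $r \in (0,1]$, so the power-contraction inequalities apply to $X, Y$ with exponent $r$. Moreover $X^r = (A^u)^{t/u} = A^t$ and similarly $Y^r = B^t$ (real powers of HPD matrices composing as expected). Thus~(\ref{eq.44}) gives
\[
\riem(A^t, B^t) \;=\; \riem(X^r, Y^r) \;\le\; r\,\riem(X, Y) \;=\; \tfrac{t}{u}\,\riem(A^u, B^u),
\]
and dividing by $t$ yields~(\ref{eq.23}). The argument for $\ssd^2$ is identical: Theorem~\ref{thm.contract} produces
\[
\ssd^2(A^t, B^t) \;=\; \jsld(X^r, Y^r) \;\le\; r\,\jsld(X, Y) \;=\; \tfrac{t}{u}\,\ssd^2(A^u, B^u),
\]
and dividing by $t$ yields~(\ref{eq.21}).

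I do not anticipate any substantive obstacle. The entire argument collapses to the identity $A^t = (A^u)^{t/u}$ combined with the already-proven $[0,1]$ power contractions; the only trivial check is that $r = t/u \in (0,1]$ under the hypothesis $1 \le t \le u$. In particular, no new operator inequality, integral representation, or eigenvalue manipulation is required here—the content of Theorem~\ref{thm.tucontract} is entirely captured by the contractions already in hand, simply rescaled.
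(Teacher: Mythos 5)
Your proof is correct, and it takes a genuinely different---and considerably shorter---route than the paper's. The substitution $X=A^u$, $Y=B^u$, $r=t/u\in(0,1]$, together with the identity $(A^u)^{t/u}=A^t$ for HPD matrices, reduces both halves of the theorem to the power contractions~\eqref{eq.44} and Theorem~\ref{thm.contract}; neither of those depends on Theorem~\ref{thm.tucontract}, so there is no circularity, and your argument in fact proves slightly more, namely that $t\mapsto \inv{t}\riem(A^t,B^t)$ and $t\mapsto\inv{t}\ssd^2(A^t,B^t)$ are nondecreasing on all of $(0,\infty)$ (the hypothesis $t\ge 1$ is never used). The paper instead works at the level of eigenvalues: for~\eqref{eq.23} it writes $\riem(A^t,B^t)$ as the Euclidean norm of the vector $\log\lambda(A^tB^{-t})$ and combines the Araki log-majorization $\lambda^{1/t}(P^tQ^t)\majl\lambda^{1/u}(P^uQ^u)$ with Theorem~\ref{thm.logmaj}; for~\eqref{eq.21} it proves the determinantal power-means inequality of Lemma~\ref{lem.tudet} by the same majorization machinery. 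What that longer route buys is stronger intermediate information: a weak majorization $|\log\lambda^{1/t}(A^tB^{-t})|\majw|\log\lambda^{1/u}(A^uB^{-u})|$, which yields the analogue of~\eqref{eq.23} for every unitarily invariant norm rather than just the Frobenius norm, plus Lemma~\ref{lem.tudet} as a standalone result. What your route buys is economy: the entire theorem is already contained in the $[0,1]$ contractions, rescaled.
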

To our knowledge, even inequality~\eqref{eq.23} is new. Before proving Theorem~\ref{thm.tucontract} we first prove an auxiliary result (Prop.~\ref{prop.tudet}).

Let $x$ and $y$ be vectors in $\reals_+^n$. Denote by $z^\downarrow$ the vector obtained by arranging the elements of $z$ in decreasing order. 
We write,
\begin{align}
  \label{eq.15}
  &x \majlw y,\quad\text{if}\quad \nlprod_{j=1}^k x_j^\downarrow \le \nlprod_{j=1}^k y_j^\downarrow,\quad\text{for}\  1 \le k \le n;\quad\text{and}\\
  \label{eq.18}
  &x \majl y,\quad\text{if}\quad x \majlw y\quad\text{and}\ \nlprod_{j=1}^n x_j^\downarrow = \nlprod_{j=1}^n y_j^\downarrow.
\end{align}
Relation~\eqref{eq.15} is called \emph{weak log-majorization}, while~\eqref{eq.18} is known as \emph{log majorization} \cite[Ch.~2]{bhatia97}. Usual \emph{weak majorization} is denoted as:
\begin{equation}
  \label{eq.19}
  x \majw y\quad\text{if}\quad \nlsum_{i=1}^k x_j^\downarrow \le
  \nlsum_{j=1}^k y_j^\downarrow,\quad\text{for}\  1 \le k \le n.
\end{equation}

We now state a simple ``power-means'' determinantal inequality (which also follows from a more general monotonicity theorem of~\cite{bhaSub78} on power means). 
\begin{proposition}
  \label{prop.tudet}
  Let $A, B > 0$; let scalars $t$, $u$ satisfy $1 \le t \le u < \infty$. Then,
  \begin{equation}
    \label{eq.3}
    \mydet^{1/t}\Bigl(\tfrac{A^t+B^t}{2}\Bigr)\quad\le\quad\mydet^{1/u}\Bigl(\tfrac{A^u+B^u}{2}\Bigr).
  \end{equation}
\end{proposition}
\begin{proof}
  Let $P=\inv{A}$, and $Q=B$. To show~\eqref{eq.3}, we may equivalently show that
  \begin{equation}
    \label{eq.17}
    \nlprod_{j=1}^n\left(\tfrac{1+\lambda_j(P^tQ^t)}{2}\right)^{1/t} \le
    \nlprod_{j=1}^n\left(\tfrac{1+\lambda_j(P^uQ^u)}{2}\right)^{1/u}.
  \end{equation}
  Now recall the log-majorization~\cite[Theorem~IX.2.9]{bhatia97}:
  \begin{equation}
    \label{eq.6}
    \lambda^{1/t}(P^tQ^t) \majl \lambda^{1/u}(P^uQ^u),
  \end{equation}
  and apply to it the monotonic function $f(r) = \log(1+r^u)$ to obtain the inequalities
  \begin{equation*}
    \nlsum_{j=1}^k \log(1+\lambda_j^{u/t}(P^tQ^t)) \le \nlsum_{j=1}^k \log(1+\lambda_j(P^tQ^t)),\quad 1\le k\le n.
  \end{equation*}
  Since log and $r \mapsto r^{1/u}$ are monotonic functions, these inequalities imply that
  \begin{equation*}
    \nlprod_{j=1}^k\Bigl(\tfrac{1+\lambda_j^{u/t}(P^tQ^t)}{2}\Bigr)^{1/u}
    \le\quad\nlprod_{j=1}^k\Bigl(\tfrac{1+\lambda_j(P^uQ^u)}{2}\Bigr)^{1/u}\quad 1 \le k \le n.
  \end{equation*}
  But since $u \ge t$, the function $r \mapsto r^{u/t}$ is convex. Thus,
  \begin{equation*}
    \begin{split}
      \nlprod_{j=1}^k\left(\tfrac{1+\lambda_j^{u/t}(P^tQ^t)}{2}\right)^{1/u}
      &\ge\quad
      \nlprod_{j=1}^k\Bigl[\left(\tfrac{1+\lambda_j(P^tQ^t)}{2}\right)^{u/t}\Bigr]^{1/u}\\
      &=
      \nlprod_{j=1}^k\left(\tfrac{1+\lambda_j(P^tQ^t)}{2}\right)^{1/t}.~\hskip1cm\qed
    \end{split}
  \end{equation*}
\end{proof}

\noindent\emph{Proof}~(Theorem~\ref{thm.tucontract}).\\
Part~(i): First observe that $\riem(X,Y)= \frob{\log E^\da(X\inv{Y})}$. Thus, we need to show
\begin{equation*}
  \tfrac{1}{t}\frob{\log E^\da(A^tB^{-t})} \le \tfrac{1}{u}\frob{\log E^\da(A^uB^{-u})}.
\end{equation*}
Equivalently, for vectors of eigenvalues we may prove
\begin{equation}
  \label{eq.4}
  \pnorm{\log\lambda^{1/t}(A^tB^{-t})}{2} \le \pnorm{\log\lambda^{1/u}(A^uB^{-u})}{2}.
\end{equation}
The log-majorization~\eqref{eq.6} yields the majorization inequality
\begin{equation*}
  \log \lambda^{1/t}(A^tB^{-t}) \prec \log \lambda^{1/u}(A^uB^{-u}),
\end{equation*}
to which we apply the map $x \mapsto \pnorm{x}{2}$ immediately obtaining~\eqref{eq.4}. Notice, that we have in fact proved the more general result
\begin{equation*}
  \tfrac{1}{t}\mynorm{\log E^\da(A^tB^{-t})}{\Phi} \le   \tfrac{1}{u}\mynorm{\log E^\da(A^uB^{-u})}{\Phi},
\end{equation*}
where $\Phi$ is a symmetric gauge function (a permutation invariant absolute norm).

Part~(ii): To prove~(\ref{eq.21}) we must show that
\begin{equation*}
  \tfrac{1}{t}\log\det\bigl( (A^t+B^t)/2\bigr) - \tfrac{t}{2}\log\det(A^tB^t) \le
  \tfrac{1}{u}\log\det\bigl( (A^u+B^u)/2\bigr) - \tfrac{u}{2}\log\det(A^uB^u).
\end{equation*}
But this inequality is immediate from Prop.~\ref{prop.tudet} and the monotonicity of log.\qed

\subsubsection{Contraction under translation}
The last basic contraction result that we prove is an analogue of the following important property~\cite[Prop.~1.6]{bougerol}:
\begin{equation}
  \label{eq.25}
  \riem(A+X, A+Y) \le \frac{\alpha}{\alpha+\beta}\riem(X,Y),\quad\text{for}\ A \ge 0,\ \text{and}\ X, Y > 0,
\end{equation}
where $\alpha=\max\set{\enorm{X},\enorm{Y}}$ and $\beta=\lambda_{\min}(A)$. This result plays a key role in deriving contractive maps for solving certain nonlinear matrix equations~\cite{leeLim}.  

We show a similar result for the \sdiv.
\begin{theorem}
  \label{thm.contract2}
  Let $X, Y > 0$, and $A \ge 0$, then
  \begin{equation}
    \label{eq.27}
    g(A) := \ds^2(A+X,A+Y),
  \end{equation}
  is monotonically decreasing and convex in $A$.
\end{theorem}
\begin{proof}
  We wish to show that if $A \le B$, then $g(A) \ge g(B)$. Equivalently, we
  can show that the gradient $\nabla_Ag(A) \le 0$~\cite[Section~3.6]{boyd}; to that end, we compute
  \begin{equation*}
    \nabla_A g(A) = \invp{\tfrac{(A+X) + (A+Y)}{2}} - \tfrac{1}{2}\invp{A+X}-\tfrac{1}{2}\invp{A+Y},
  \end{equation*}
  which is easily seen to be negative since $X \mapsto \inv{X}$ is operator convex.

  To prove that $g$ is convex, we look at its Hessian $\nabla^2g(A)$. Using the shorthand $P=\invp{A+X}$ and $Q=\invp{B+X}$, this Hessian is seen to be
  \begin{equation*}
    \nabla^2g(A) = \half(P \kron P + Q \kron Q) - \invp{\tfrac{\inv{P} + \inv{Q}}{2}} \kron \invp{\tfrac{\inv{P} + \inv{Q}}{2}}.
  \end{equation*}
  Again using operator convexity of $X \mapsto X^{-1}$ we obtain 
  \begin{equation*}
    \nabla^2g(A) \ge \tfrac{P \kron P + Q \kron Q}{2} - \tfrac{P+Q}{2} \kron \tfrac{P+Q}{2},
  \end{equation*}
  which is easily seen to be semidefinite because $P \ge Q$ and 
  \begin{equation*}
    P \kron P + Q \kron Q - P \kron Q + Q \kron P = (P-Q)\kron (P-Q) \ge 0.\hskip1cm\qed
  \end{equation*}
\end{proof}
The following corollary is immediate (\emph{cf.}~(\ref{eq.25})).
\begin{corollary}
  Let $X, Y > 0$, $A \ge 0$, $\beta = \lambda_{\min}(A)$. Then,
  \begin{equation}
    \label{eq.28}
    \ds^2(A+X, A+Y) \le \ds^2(\beta I + X, \beta I + Y) \le \ds^2(X, Y).
  \end{equation}
\end{corollary}
\begin{figure}[tbp]
  \centering
  \includegraphics[scale=0.27]{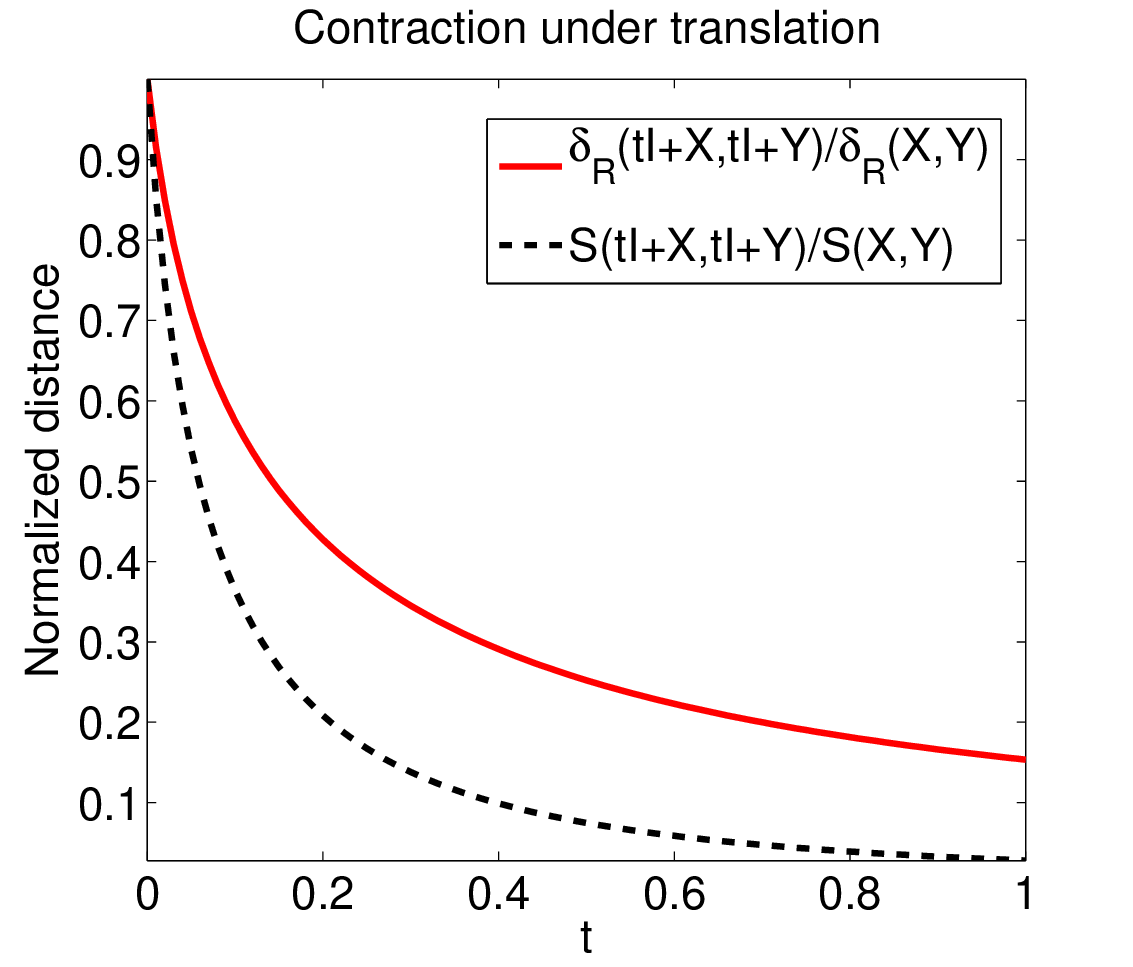}
  \caption{Illustration of Thm.~\ref{thm.contract2}. The plot shows the amount of contraction displayed by $\riem$ and $\ds^2$, for a pair of matrices $X$ and $Y$ with eigenvalues in $(0,1)$, when translated by $tI$ ($t \in [0,1]$). We see that $S$ is more contractive than $\riem$; more interestingly, the shape of the two curves is similar.}
  \label{fig.trans}
\end{figure}

\vspace*{-15pt}
\subsection{Contraction under compression}
In this section we establish a powerful compression property of the S-divergence, which connects it intimately with the Hilbert and Thompson metrics. We begin by setting up a few key results.

\begin{proposition}
  \label{prop.logdet}
  Let $P \in \C^{n\times k}$ ($k \le n$) have full column rank. The function $f : \pd_n \to \reals \equiv X \mapsto \log\det(P^*XP)-\log\det(X)$ is operator decreasing. 
\end{proposition}
\begin{proof}
  It suffices to show that $\nabla f(X) \le 0$. This amounts to establishing that
  \begin{equation}
    \label{eq:2}
    P(P^*XP)^{-1}P^* \le \inv{X}\quad\Leftrightarrow\quad
    \begin{bmatrix}
      \inv{X} & P\\
      P^*     & P^*XP
    \end{bmatrix} \ge 0.
  \end{equation}
  Since {\scriptsize$\begin{bmatrix} \inv{X} & I\\ I & X\end{bmatrix}$} $\ge 0$, the inequality~\eqref{eq:2} follows immediately upon realizing that
  \begin{equation*}
    \begin{bmatrix}
      \inv{X} & P\\
      P^*     & P^*XP
    \end{bmatrix} =
    \begin{bmatrix}
      I & 0\\
      0 & P^*
    \end{bmatrix}
    \begin{bmatrix}
      \inv{X} & I\\
      I       & X
    \end{bmatrix}
    \begin{bmatrix}
      I & 0\\
      0 & P
    \end{bmatrix}.\qed
  \end{equation*}
\end{proof}

\begin{corollary}
  \label{cor.ratio}
  Let $X, Y > 0$. Let $A=\pfrac{X+Y}{2}$, $G=X\gm Y$; and let $P\in \C^{n\times k}$ ($k \le n$) have full column rank. Then,
  \begin{equation}
    \label{eq.16}
    \frac{\det(P^*AP)}{\det(P^*GP)} \le \frac{\det(A)}{\det(G)}.
  \end{equation}
\end{corollary}
\begin{proof}
  Since $A \ge G$, it follows from Prop.~\ref{prop.logdet} that
  \begin{equation*}
    \log\det(P^*AP)-\log\det(A) \le \log\det(P^*GP)-\log\det(G).
  \end{equation*}
  Rearranging, and using the fact that $P^*AP \ge P^*GP$, we obtain~\eqref{eq.16}.
\end{proof}

\begin{theorem}[\!\!\protect{\citep[Thm.~3]{ando79}}]
  \label{prop.linmap}
  Let $\Pi : \pd_n \to \pd_k$ be a positive linear map. Then,
  \begin{equation}
    \label{eq.24}
    \Pi(A\gm B) \le \Pi(A)\gm \Pi(B)\qquad\text{for } A, B \in \pd_n.
  \end{equation}
\end{theorem}

We are now ready to prove the main theorem of this section.
\begin{theorem}
  \label{thm.compr}
  Let $P\in \C^{n\times k}$ ($k \le n$) have full column rank. Then,
  \begin{equation}
    \label{eq.9}
    \ds^2(P^*AP,P^*BP) \le \ds^2(A,B)\qquad\text{for } A, B \in \pd_n.
  \end{equation}
\end{theorem}
\begin{proof}
  Observe that~\eqref{eq.9} does not follow from the known inequality $f(U^*AU) \le U^*f(A)U$ for operator convex $f$, because $\ds^2$ is nonconvex. We need to show that
  \begin{equation}
    \label{eq.39}
    \log\frac{\det\pfrac{P^*(A+B)P}{2}}{\sqrt{\det(P^*AP)\det(P^*BP)}} \le \log\frac{\det\pfrac{A+B}{2}}{\sqrt{\det(AB)}}.
  \end{equation}
  From Prop.~\ref{prop.linmap} it follows that $P^*(A\gm B)P \le (P^*AP)\gm(P^*BP)$, which implies that
  \begin{equation*}
    \frac{1}{\sqrt{\det(P^*AP)\det(P^*BP)}} = \frac{1}{\det[(P^*AP)\gm(P^*BP)]} \le \frac{1}{\det(P^*(A\gm B)P)}.
  \end{equation*}
  Invoking Corollary~\ref{cor.ratio} and taking logarithms we obtain~\eqref{eq.39}.\qed
\end{proof}

\begin{corollary}
  \label{cor.compr.tensor}
  Let $A, B, C, D > 0$; let $\circ$ denote the Hadamard product. Then,
  \begin{equation}
    \label{eq.40}
    \ds^2(A\circ B, C \circ D) \le \ds^2(A \kron B, C \kron D).
  \end{equation}
\end{corollary}
\begin{proof}
  We know that $A\circ B$ is a principal submatrix of $A \kron B$. In particular, there is a projection $P$ such that $P^*(A\kron B)P = A\circ B$. So, \eqref{eq.40} reduces to showing that $\ds^2(P^*(A\kron B)P, P^*(C\kron D)P) \le \ds^2(A \kron B, C \kron D)$, which follows from Thm.~\ref{thm.compr}.\qed
\end{proof}

One may wonder if Theorem~\ref{thm.compr} holds more generally for all positive linear maps, not just congruence transforms. The answer turns out to be negative, as may be seen by considering $\Pi : X \mapsto X \oplus X$. Then, $\ds^2(\Pi(X),\Pi(Y)) =2\ds^2(X,Y) \not\le \ds^2(X,Y)$.

Next, one may ask whether a Theorem~\ref{thm.compr} extends to $\riem$? Corollary~\ref{corr.riem.compress} shows that this compression does extend to $\riem$, and actually follows from a more general theorem (Theorem~\ref{thm.geig}); we believe that this basic theorem must exist in the literature, but provide our own proof for completeness.

\begin{theorem}
  \label{thm.geig}
  Let $A, B \in \pd_n$, and $P\in \C^{n\times k}$ ($k \le n$) have full colrank. Then,
  \begin{equation}
    \label{eq.66}
    \lambda_j^\da(P^*AP,P^*BP) \le \lambda_j^\da(A,B),\quad 1 \le j \le k.
  \end{equation}
\end{theorem}
\begin{proof}
  Since $B$ positive definite, the eigenvalue min-max theorem shows that 
  \begin{equation*}
    \lambda_j^\da(A,B) = \min_{\dim V = j}\max_{x \in V}\frac{x^*Ax}{x^*Bx}.
  \end{equation*}
  From this variational representation it follows that
  \begin{equation*}
    \begin{split}
      \lambda_j^\da(A,B)\quad&=\quad\min_{\dim V = j}\max_{w \in V}\frac{w^*Aw}{w^*Bw} \quad\ge\quad\min_{\dim V = j}\ \max_{\substack{w\\ w=Px, w \in V, x \neq 0}}\frac{w^*Aw}{w^*Bw}\\
      &=\quad\min_{\dim V=j}\max_{x \in V}\frac{x^*P^*APx}{x^*P^*BPx}=\lambda_j^\da(P^*AP,P^*BP).
    \end{split}
  \end{equation*}
  The second-to-last equality above holds since $\set{Px | x \neq 0}$ is a subspace of dimension $j$, due to $P$ having full column rank.\qed
\end{proof}

\begin{corollary}
  \label{corr.riem.compress}
  Let $P \in \C^{n\times k}$ ($k \le n$) have full column rank. Then,
  \begin{equation}
    \label{eq.57}
    \delta_\Phi(P^*AP,P^*BP) \le \delta_\Phi(A,B),
  \end{equation}
  where $\Phi$ is any symmetric gauge function.
\end{corollary}
\begin{proof}
  Recall that $\delta_\Phi(A,B)=\pnorm{\log E^\da(A^{-1}B)}{\Phi}$. Thus, our task is to show that
  \begin{equation}
    \label{eq.65}
    \pnorm{\log E^\da(P^*AP\invp{P^*BP})}{\Phi} \le \delta_\Phi(A,B).
  \end{equation}
  This result follows by realizing that $\lambda(A^{-1}B)=\lambda(A,B)$ and invoking Theorem~\ref{thm.geig}.
\end{proof}

\subsection{Differences between $\ds$ and $\riem$}
So far we have highlighted similarities between $\ds$ and $\riem$. It is worthwhile to highlight some differences too. Since we have implicitly already covered this ground, we summarize these differences in Table~\ref{tab.diff}.

\begin{table}[h]\footnotesize
  \begin{tabular}{l|l||l|l}
    Riemannian metric & Ref. & \sdiv & Ref.\\
    \hline
    
    Eigenvalue computations needed & E & Cholesky decompositions suffice & E \\
    $e^{-\beta\riem^2(X,Y)}$ usually not a kernel & E & $e^{-\beta\ds^2(X,Y)}$ a kernel for many $\beta$ &Th.\ref{thm.wallach}\\
    $\riem$ geodesically convex & \citep[Th.6]{bhatia07} & $\ssd$ not geodesically convex & E\\
    $(\pd_n,\riem)$ is a CAT(0)-space & \citep[Ch.6]{bhatia07} & $(\pd_n,\ssd)$ not a CAT(0)-space & E\\
    Computing means with $\riem^2$ difficult & \citep{bruno} & Computing means with $\ds^2$ easier & Sec.~\ref{sec.gm}\\
  \end{tabular}
  \caption{Some differences between $\riem$ and $\ds$ at a glance. An `E' indicates that it is easy to verify the claim or to find a counterexample.}\label{tab.diff}
\end{table}

\subsection{Bi-Lipschitz-like comparison}
We end our discussion of relations between $\riem$ and $\ds$ by showing how they directly compare with each other; here, our main result is the sandwiching inequality~\eqref{eq.54}.
\begin{theorem}
  \label{thm.bnds}
  Let $A, B \in \pd_n$. 
  Then, we have the following bounds
  \begin{equation}
    \label{eq.54}
    8\ds^2(A,B) \le \riem^2(A,B) \le 2\thom(A,B)\bigl(\ds^2(A,B) + n\log 2\bigr).
  \end{equation}
\end{theorem}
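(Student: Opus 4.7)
The strategy is to reduce everything to a scalar inequality in the log–eigenvalues using simultaneous diagonalization, and then verify two elementary scalar facts.

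\textbf{Step 1 (Reduction).} By Lemma~\ref{lem.diag2}, choose an invertible $P$ with $P^*AP=I$ and $P^*BP=D$ where $D=\Diag(d_1,\dots,d_n)$ and $d_i>0$. Congruence invariance of $\jsld$ (Corollary~\ref{corr.cong}) together with the analogous congruence invariances of $\riem$ and $\thom$ (both standard for $X\mapsto P^*XP$) lets us assume $A=I$, $B=D$. Writing $t_i:=\log d_i$, one has
\begin{equation*}
\riem^2(I,D)=\nlsum_i t_i^2,\qquad \thom(I,D)=\nlmin_i|t_i|\text{ replaced by }\max_i|t_i|,\qquad \jsld(I,D)=\nlsum_i\log\cosh(t_i/2),
\end{equation*}
where the last identity uses $(1+d_i)/(2\sqrt{d_i})=\cosh(t_i/2)$.

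\textbf{Step 2 (Left inequality).} It suffices to prove the scalar bound $8\log\cosh(t/2)\le t^2$ for all $t\in\reals$, since summing over $i$ yields $8\jsld\le\riem^2$. Substituting $u=t/2$, this is the classical inequality $\cosh u\le e^{u^2/2}$. A one-line proof: compare Taylor series,
\begin{equation*}
\cosh u=\nlsum_{k\ge 0}\frac{u^{2k}}{(2k)!},\qquad e^{u^2/2}=\nlsum_{k\ge 0}\frac{u^{2k}}{2^k k!},
\end{equation*}
and note that $(2k)!\ge 2^k k!$ because $(2k)!/(2^k k!)=(2k-1)!!\ge 1$.

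\textbf{Step 3 (Right inequality).} The key observation is
\begin{equation*}
2\cosh(t_i/2)=e^{t_i/2}+e^{-t_i/2}\ge e^{|t_i|/2},\qquad\text{so}\qquad \log\bigl(2\cosh(t_i/2)\bigr)\ge |t_i|/2.
\end{equation*}
Summing over $i$ gives $\jsld(A,B)+n\log 2=\sum_i\log(2\cosh(t_i/2))\ge\tfrac12\sum_i|t_i|$. Combining with the trivial bound $t_i^2\le(\max_j|t_j|)\cdot|t_i|$,
\begin{equation*}
\riem^2(A,B)=\nlsum_i t_i^2\le\Bigl(\nlmin_j|t_j|\text{ replaced by }\max_j|t_j|\Bigr)\nlsum_i|t_i|=\thom(A,B)\nlsum_i|t_i|\le 2\thom(A,B)\bigl(\jsld(A,B)+n\log 2\bigr).
\end{equation*}

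\textbf{Expected difficulty.} There is no real obstacle here once the reduction is made; the whole argument is essentially two lines of scalar calculus. The only subtlety is to notice the ``right'' way to split the right-hand bound, namely as $|t_i|^2\le\max_j|t_j|\cdot|t_i|$ combined with a lower bound on $\jsld+n\log 2$ by $\tfrac12\sum|t_i|$. The factor $n\log 2$ on the right is precisely what is needed to turn $\log\cosh$ into $\log(2\cosh)$ so that the pointwise bound $\log(2\cosh(t/2))\ge|t|/2$ applies.
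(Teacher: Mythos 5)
Your proof is correct and follows the same overall architecture as the paper's: both arguments reduce the claim to the log-eigenvalues of $B$ relative to $A$ (you via simultaneous congruence diagonalization through Lemma~\ref{lem.diag2}, the paper by writing $\riem$, $\thom$ and $\jsld$ directly in terms of $\lambda_i(AB^{-1})$ --- the same reduction), and both obtain the upper bound from the pointwise estimate $\log\bigl(2\cosh(t/2)\bigr)\ge |t|/2$ followed by the H\"older-type step $\nlsum_i t_i^2\le\bigl(\max_j|t_j|\bigr)\nlsum_i|t_i|$. The one genuine difference is in the lower bound: the paper proves $8\log\cosh(t/2)\le t^2$ by a calculus argument, showing that $g(u,\sigma)=u^2-\sigma\bigl(\log(1+e^u)-u/2-\log 2\bigr)$ is convex with a critical point at $0$ exactly when $\sigma\le 8$ (via AM--GM applied to $e^u/(1+e^u)^2\le 1/4$), whereas you prove the equivalent inequality $\cosh u\le e^{u^2/2}$ by termwise comparison of Taylor coefficients using $(2k)!\ge 2^k k!$. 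Your route is shorter and entirely elementary; both make it clear that $8$ is the best possible constant, since the two sides agree to second order at $t=0$. One cosmetic defect: the phrases ``$\min_i|t_i|$ replaced by $\max_i|t_i|$'' in your Steps 1 and 3 are garbled, but the intended reading $\thom(I,D)=\max_i|t_i|$ is the correct one and is what your argument actually uses.
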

\begin{proof}
  First we establish the upper bound. To that end, we first rewrite $\riem$ as
  \begin{equation}
    \label{eq.5}
    \riem(A,B) := \bigl(\nlsum_i \log^2 \lambda_i(A\inv{B})\bigr)^{1/2}.
  \end{equation}
  Since  $\lambda_i(A\inv{B}) > 0$, we may write $\lambda_i(A\inv{B}) := e^{u_i}$ for some $u_i$, whereby
  \begin{equation}
    \label{eq.52}
    \riem(A,B)=\enorm{u}\quad\text{and}\quad\thom(A,B) = \infnorm{u}.
  \end{equation}
  Using the same notation we also obtain 
  \begin{equation}
    \label{eq.7}
    \ds^2(A,B) = \nlsum_i (\log(1+e^{u_i}) - u_i/2 - \log 2).
  \end{equation}
  To relate the quantities~\eqref{eq.52} and~\eqref{eq.7}, it is helpful to consider the function
  \begin{equation*}
    f(u) := \log(1+e^u) - u/2 - \log 2.
  \end{equation*}
  If $u < 0$, then $\log(1+e^u) \ge \log 1 = 0$ holds and $-u/2=|u|/2$; while if $u \ge 0$, then $\log(1+e^u) \ge \log e^u = u$ holds. For both cases, we have the inequality
  \begin{equation}
    \label{eq.13}
    f(u) \ge |u|/2 - \log 2.
  \end{equation}
  Since $\ds^2(A,B)=\nlsum_i f(u_i)$, inequality~\eqref{eq.13} leads to the bound
  \begin{equation}
    \label{eq.8}
    \ds^2(A,B) \ge -n\log 2 + \half\nlsum_i |u_i| = \half \norml{u} -n\log 2.
  \end{equation}
  From H\"older's inequality we know that $u^Tu \le
  \infnorm{u}\norml{u}$; so we immediately obtain
  \begin{equation*}
    \riem^2(A,B) \le 2\thom(A,B)(\ds^2(A,B)+n\log 2).
  \end{equation*}
  To obtain the lower bound, consider the function
  \begin{equation}
    \label{eq.56}
    g(u, \sigma) := u^Tu - \sigma(\log(1+e^u) - u/2-\log 2).
  \end{equation}
  The first and second derivatives of $g$ with respect to $u$ are given by
  \begin{equation*}
    \label{eq:20}
    g'(u, \sigma) = 2u - \frac{\sigma e^u}{1+e^u} + \frac{\sigma}{2},\qquad
    g''(u, \sigma) = 2 - \frac{\sigma e^u}{(1+e^u)^2}.
  \end{equation*}
  Observe that for $u=0$, $g'(u, \sigma)=0$. To ensure that $0$ is the minimizer
  of~\eqref{eq.56}, we now determine the largest value of $\sigma$ for which
  $g'' \ge 0$. Write $z:=e^u$; we wish to ensure that $\sigma z/ (1+z)^2 \le 2$. Since $z \ge 0$, the arithmetic-geometric inequality shows that
  $\frac{z}{(1+z)^2} = \frac{\sqrt{z}}{1+z}\frac{\sqrt{z}}{1+z} \le
  \frac{1}{4}$. Thus, for $0 \le \sigma \le 8$, the
  inequality $\sigma z / (1+z)^2 \le 2$ holds (or equivalently $g''(u, \sigma) \ge
  0$). Hence, $0 = g(0, \sigma) \le g(u, \sigma)$, which implies that
  \begin{equation*}
    \riem^2(A,B) - \sigma\ds^2(A,B) = \nlsum_i g(u_i, \sigma) \ge 0,\quad\text{for}\ 0 \le \sigma \le 8.\hskip 1cm\qed
  \end{equation*}
\end{proof}

\vspace*{-12pt}
\section{\sdiv-mean}
\label{sec.mtxmeans}
We briefly mention the (nonconvex) problem of computing means for collection of input positive definite matrices. Similar conclusions (using different arguments) were previously obtained in~\citep{chebbi}---our analysis provides a complementary view.

Given input matrices $A_1,\ldots,A_m\in \pd_n$ and nonnegative weights $w_i \ge 0$ such that $\nlsum_{i=1}^m w_i = 1$, the \emph{S-mean} problem is to compute
\begin{equation}
  \label{eq.1}
  \min_{X > 0}\ h(X) := \nlsum_{i=1}^mw_i\ds^2(X,A_i),
\end{equation}
This problem was essentially studied in~\cite{iccv11}, and more thoroughly investigated by~\citet{chebbi}. Both~\citep{iccv11,chebbi} considered the necessary optimality condition (ignoring $X > 0$ for now)
\begin{equation}
  \label{eq.2}
  \nabla h(X) = 0\quad\Leftrightarrow\quad \inv{X} = \nlsum_i w_i\invp{\tfrac{X+A_i}{2}},
\end{equation}
and both made a minor oversight by claiming the unique positive definite solution to~\eqref{eq.2} to be the global minimum of~\eqref{eq.1}, whereas their proofs established only stationarity, neither global nor local optimality. This oversight is easily fixed.

Since $\ds^2$ is strictly geodesically convex (Theorem~\ref{thm.jointgc}), it follows that $h(X)$ is also strictly geodesically convex. Thus, once existence of a minimizer has been established, its  uniqueness is immediate---moreover,  ensuring~\eqref{eq.2} is also sufficient. Existence is also easy, because if $X \to 0$ or $X\to \infty$, the objective $h(X) \to \infty$.\footnote{We thank an anonymous referee for alerting us to the need of invoking the boundary behavior.}

\section{S-Divergence-median}
\label{sec.median}
Instead of minimizing a sum-of-squared distances, the \emph{geometric median} problem seeks a solution to
\begin{equation}
  \label{eq:1}
  \min_{X > 0}\quad \phi(X) := \nlsum_{i=1}^m w_i \ds(X,A_i).
\end{equation}
In some cases, geometric medians are more preferred than geometric means as they may be more robust~\citep{chaCheMoa13,nieBha13,barber}. The S-median~\eqref{eq:1} was recently also studied by~\citet{chaCheMoa13}, who used it for an application in diffusion tensor imaging. 

To solve~\eqref{eq:1} we make the simplifying assumption that the median $\neq A_i$ for any $1\le i\le m$---in particular, for all $X \neq A_i$, $\phi(X)$ is differentiable. As before ignoring the constraint $X> 0$, we then obtain the first-order necessary condition
\begin{equation}
  \label{eq:3}
  \nabla\phi(X) = \nlsum_{i=1}^m\frac{w_i}{\ds(X,A_i)}\left[\pfrac{X+A_i}{2}^{-1}-\inv{X} \right] = 0.
\end{equation}
For solving~\eqref{eq:3}, \citep{chaCheMoa13} propose the iterating the following nonlinear map\footnote{There is a typo in~\citep{chaCheMoa13}; the correct formula~\eqref{eq:3} is mentioned in~\citep[pg.19]{moakTalk}.}
\begin{equation}
  \label{eq:4}
  \Gc(X) = \sum_{i=1}^m \frac{w_i}{\ds(X,A_i)}
  \left[
    \sum_{i=1}^m \frac{w_i}{\ds(X,A_i)}\left(\frac{X+A_i}{2}\right)^{-1}
  \right]^{-1}.
\end{equation}
In~\citep{chaCheMoa13} the authors claim the iteration $X_{k+1}=\Gc(X_k)$ to be a  contraction under the Thompson metric, and use that claim to deduce existence, uniqueness, and construction of the S-median. Unfortunately, their contraction claim is erroneous. Indeed, 
\begin{equation*}
  \begin{split}
  X=
  \small\begin{bmatrix}
    10 &  3\\
     3 &  9
  \end{bmatrix},\quad &Y=
  \begin{bmatrix}
    8 &  -6\\
   -6 &   45
  \end{bmatrix},\quad A_1 =
  \begin{bmatrix}
    5 &  5\\
    5 &  10
  \end{bmatrix},\quad A_2=
  \begin{bmatrix}
    10 &  1\\
     1 &  5
  \end{bmatrix}, w_1=w_2=\frac12,\\
  \implies\quad&\quad\thom(\Gc(X),\Gc(Y)) = 2.9314 > \thom(X,Y)=1.8324,
\end{split}
\end{equation*}
which shows that $\Gc(X)$ is \emph{not} a contraction under the Thompson metric $\thom$.

Fortunately, the map~(\ref{eq:4}) still leads to a valid fixed-point iteration, but with a different choice of metric. Specifically, instead of $\thom$, we propose to use \emph{Hilbert's projective metric} on $\pd_n$ which is given by~\citep[see e.g.,]{lemNuss12,koufany}:
\begin{equation}
  \label{eq:5}
  \hilb(X,Y) := \log\left(\frac{\lambda_M(X,Y)}{\lambda_m(X,Y)}\right),
\end{equation}
where $\lambda_M$ ($\lambda_m$) denotes the largest (smallest) generalized eigenvalue of $(X,Y)$. 

To prove our main result (Theorem~\ref{thm.smedian.contract}) for this section we need to first recall the following key properties of $\hilb$.
\begin{proposition}
  \label{prop.hilb}
  The Hilbert projective metric $\hilb$ satisfies the following:
  \begin{enumerate}[(i)]
    \setlength{\itemsep}{0pt}
  \item $\hilb(X^{-1},Y^{-1})=\hilb(X,Y)$
  \item $\hilb(\alpha X, \beta Y)=\hilb(X,Y)$ for all $\alpha, \beta > 0$ and $X, Y > 0$.
  \item $\hilb(\nlsum_{i=1}^m a_iX_i, \nlsum_{i=1}^m b_i Y_i) \le \max_{1\le i \le m}\hilb(X_i,Y_i)$, for $a_i,b_i > 0$ and $X_i,Y_i > 0$.
  \item Let $A \ge 0$ and $X, Y > 0$; then, $\hilb(A+X,A+Y) \le \frac{\alpha}{\alpha+\beta}\hilb(X,Y)$, where $\alpha=\max(\norm{X},\norm{Y})$ and $\beta=\lambda_{\min}(A)$.
  \end{enumerate}
\end{proposition}
\begin{proof}
  Property~(i) is obvious; (ii) is well-known~\citep{koufany}; (iii) follows the same argument as for $\thom$ in~\citep[Lemma~10.1(iv)]{lawLim08}; and (iv) follows from \citep[Prop.~1.6]{bougerol}.
\end{proof}

\begin{theorem}
  \label{thm.smedian.contract}
  Let $A_i > 0$ ($1\le i \le m$), $w_i \ge 0$ with $\nlsum_{i=1}^m w_i = 1$. Let $\Gc : \pd_n \to \pd_n$ be the nonlinear map defined~\eqref{eq:4}. Then, $\Gc$ is nonexpansive in $\hilb$, i.e.,
  \begin{equation*}
    \hilb(\Gc(X),\Gc(Y)) \le \hilb(X,Y),\qquad X, Y > 0,
  \end{equation*}
  Moreover, $\Gc$ is contractive if not all $A_i$ are equal (and $X\neq Y$).
\end{theorem}
\begin{proof}
 The projective property (Prop.~\ref{prop.hilb}-(ii)) of $\hilb$ proves crucial for analyzing~\eqref{eq:4}. Indeed, write $\Gc(X)=\alpha(X)\Fc(X)$, where
  \begin{equation*}
    \alpha(X) := \nlsum_{i=1}^m \frac{w_i}{\ds(X,A_i)}\quad\text{and}\quad 
    \Fc(X) = \left[
    \sum_{i=1}^m \frac{w_i}{\ds(X,A_i)}\left(\frac{X+A_i}{2}\right)^{-1}
  \right]^{-1}.
  \end{equation*}
  Thus, we may perform the following calculations
  \begin{align*}
    \hilb(\Gc(X),\Gc(Y)) &= \hilb(\alpha(X)\Fc(X),\alpha(Y)\Fc(Y))\\
    &= \hilb(\Fc(X),\Fc(Y))\\
    &\le\hilb(\nlsum_{i=1}^m \tfrac{w_i}{\ds(X,A_i)}\pfrac{X+A_i}{2}^{-1},\nlsum_{i=1}^m \tfrac{w_i}{\ds(X,A_i)}\pfrac{Y+A_i}{2}^{-1})\\
    &\le\max_{1\le i \le m}\hilb(X_i,Y_i),\qquad X_i = X+A_i, Y_i = Y+A_i\\
    &=\max_{1\le i \le m}\hilb(X+A_i,Y+A_i)\\
    &\le \max_{1\le i \le m}\gamma_i\hilb(X,Y),\qquad \gamma_i < 1\\
    &= \gamma\hilb(X,Y),
  \end{align*}
  where $\gamma_i := $ and $\gamma := \max_{1\le i \le m}\gamma_i$. The second inequality above uses Prop.~\ref{prop.hilb}-(ii),(iii), while the third one invokes Prop.~\ref{prop.hilb}-(iv). Clearly, if $X\neq Y$ and not all $A_i$ are equal (in which case the median is just $A_1$), $\Gc$ is a strict contraction.\qed
\end{proof}
\begin{corollary}
  \label{sec.corr}
  Starting with a suitable $X_0 > 0$, let $\set{X_k}_{k \ge 0}$ be the sequence generated by $X_{k+1} = \Gc(X_k)$. Assume that none of the $A_i$s is the median, and that $\ds(X_k,A_i) > 0$ for all $k$ and $i$. Then, the sequence $\set{X_k}$ converges to a point $X_*$ that is the unique positive definite solution to~\eqref{eq:3}, and this point is the S-median. 
\end{corollary}
\begin{proof}
  Observe that $\phi(0)=\phi(X)=+\infty$; thus, since $\phi(X)$ is continuous on $\pd_n$ it must attain its minimum in the interior. Thus, \eqref{eq:3} must have a positive definite solution. The metric space $(\set{X\ge 0},\hilb)$ is complete~\citep{koufany}, and Theorem~\ref{thm.smedian.contract} shows that under our assumptions, $\Gc$ is a strict contraction in $\hilb$. Therefore, if  \eqref{eq:4} has a solution, then from the fixed-point theorem of Edelstein~\citep{edel62}, it follows that $\Gc$ generates iterates which stay within a compact set and converge to the unique fixed point of $\Gc$. This fixed-point is positive definite by construction, and satisfies~\eqref{eq:3}, whereby it is the desired S-median.\qed
\end{proof}

Figure~\ref{fig:smed} illustrates empirical behavior of the fixed-point (FP) iteration $X_{k+1}=\Gc(X_k)$ on three different collections of positive matrices. The plot compares the FP iteration against a manifold based conjugate gradient method by showing Frobenius norms of the gradients $\nabla \phi$ obtained as a function of running time. The FP iteration turns out to run remarkably faster than the manifold conjugate gradient method (taken from~\citep{manopt}). Both the compared methods are implemented in \textsc{Matlab} and the experiments were run on a personal laptop with a quadcore Intel i7-3520M (2.90Ghz) processor under the Ubuntu 12.10 operating system.

\begin{figure}[tbp]
  \centering
  \includegraphics[width=.35\textwidth]{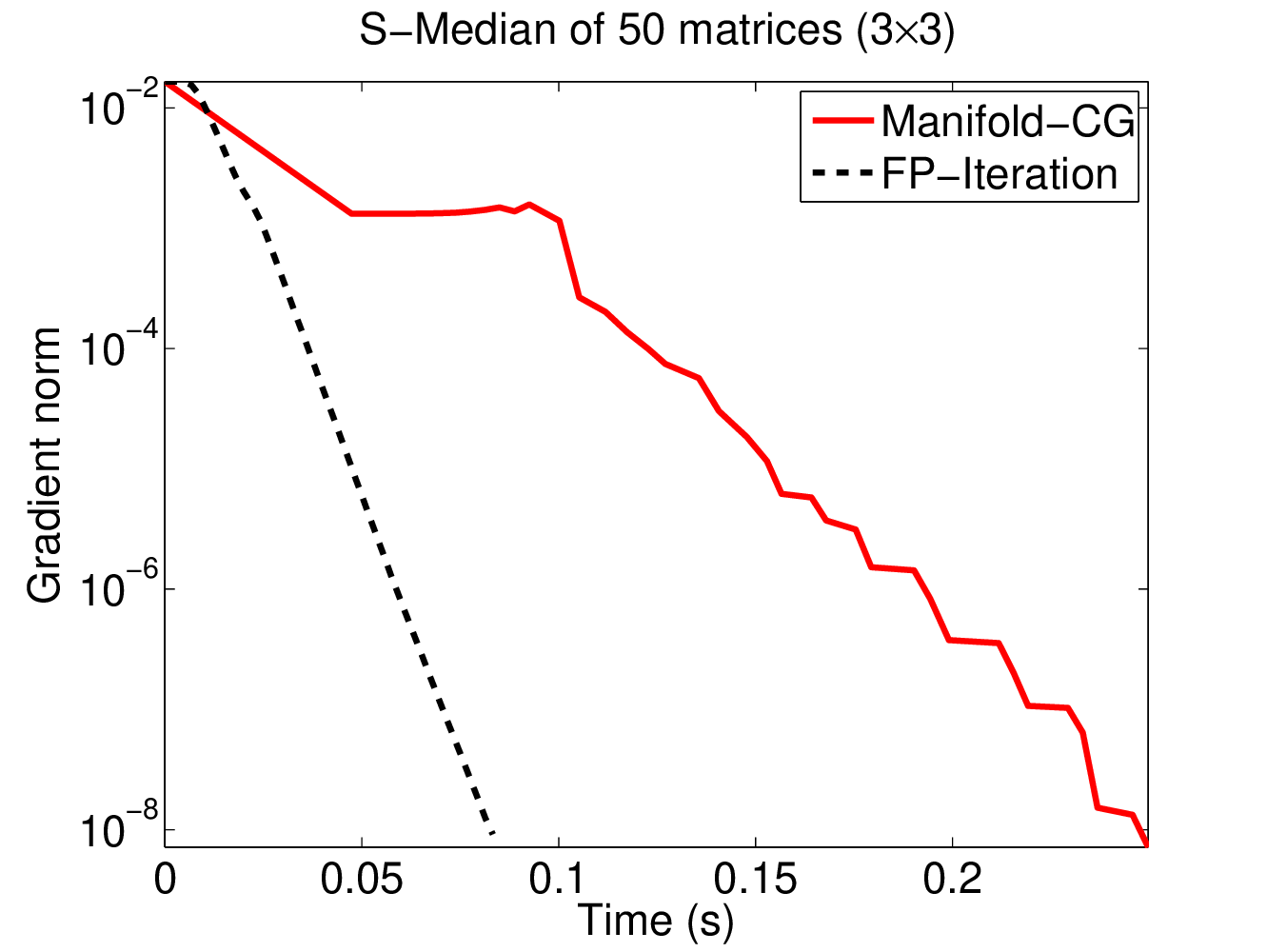}\hskip-10pt
  \includegraphics[width=.35\textwidth]{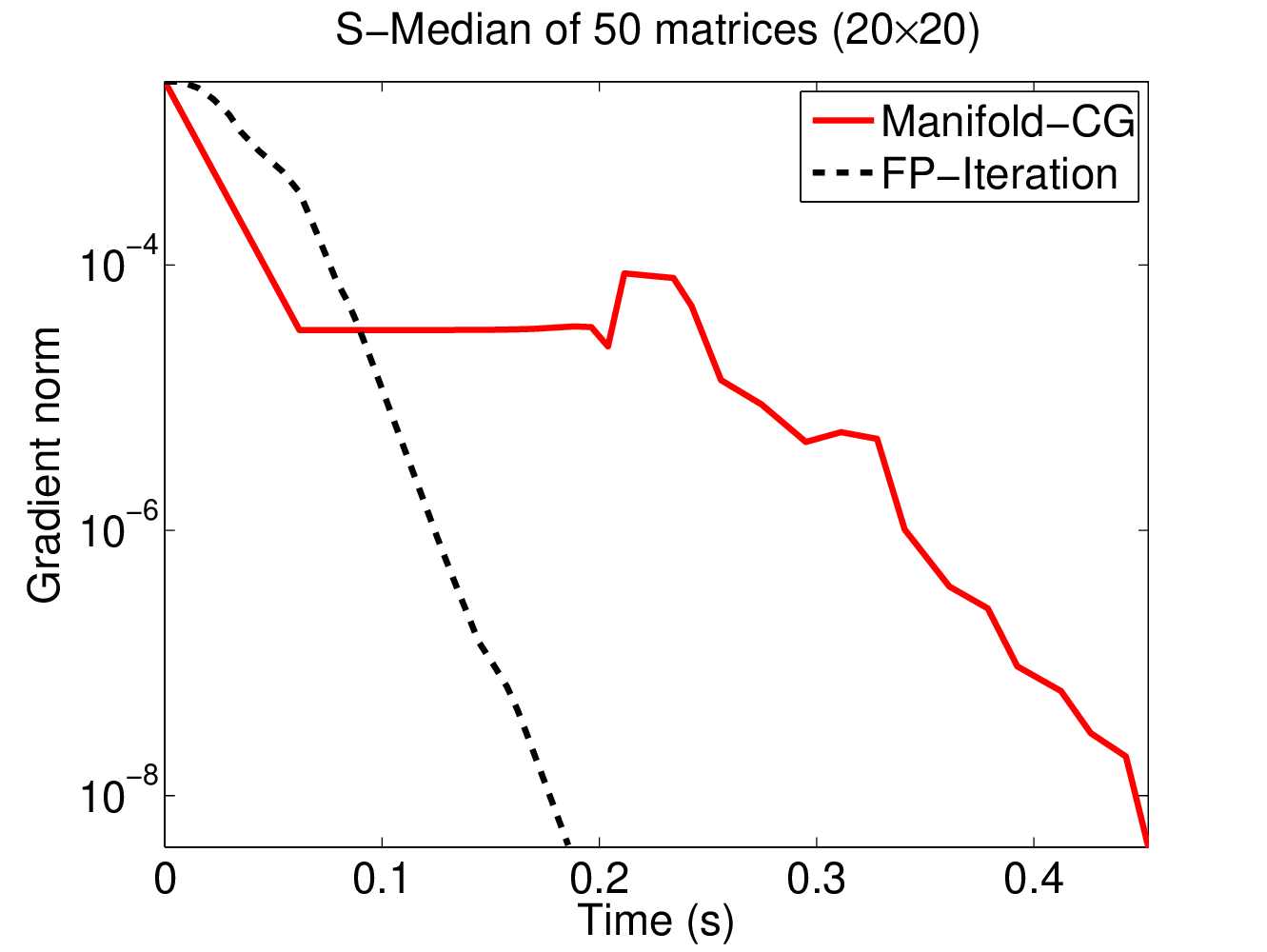}\hskip-10pt
  \includegraphics[width=.35\textwidth]{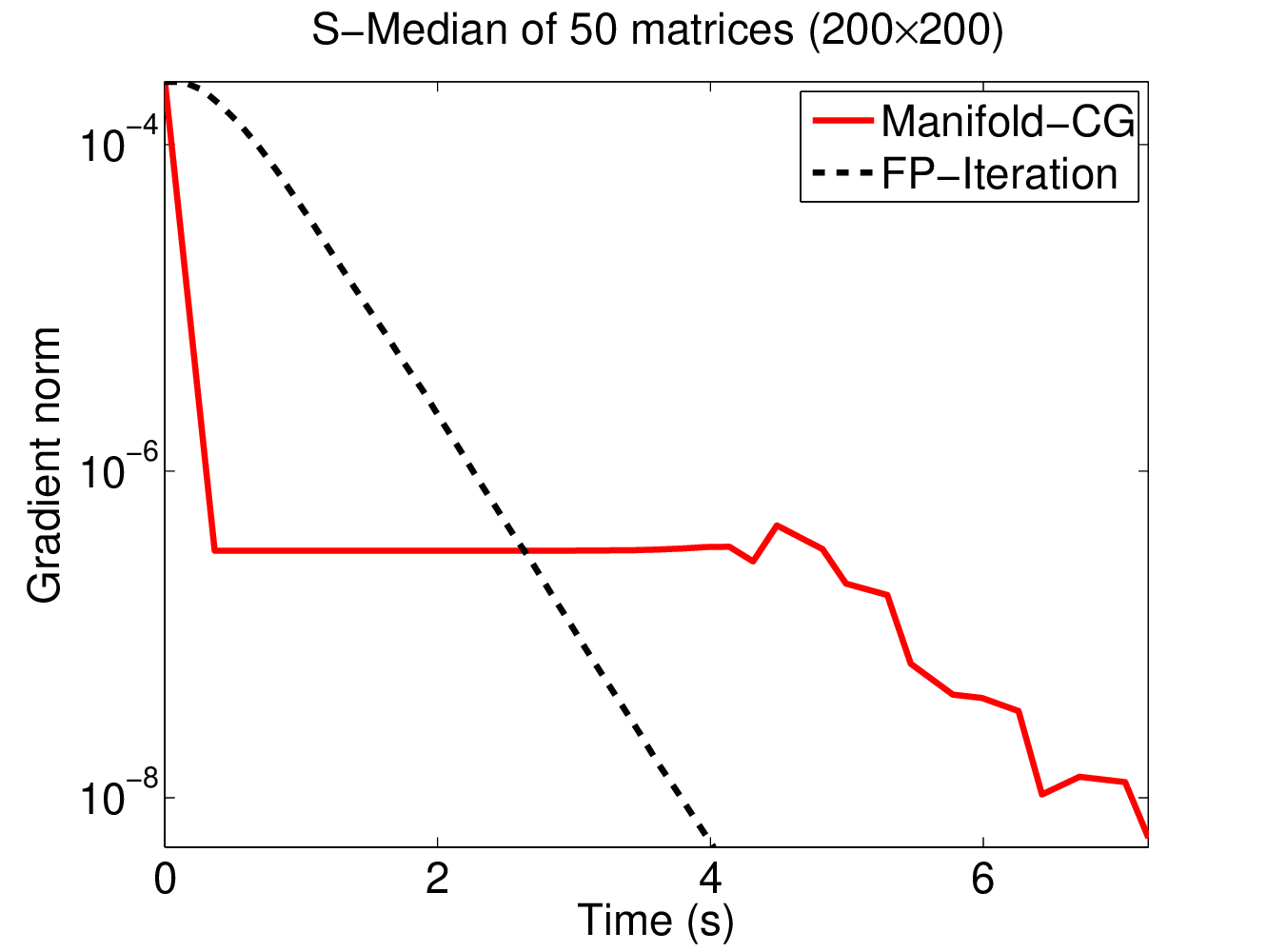}
  \caption{Convergence comparison between fixed-point iteration using~\eqref{eq:4} and the Riemannian conjugate-gradient method from \textsc{Manopt}~\citep{manopt}. The plots report the gradient-norm $\frob{\nabla \phi}$ as a function of running time (secs) for collections of random (Wishart) matrices in $\pd_3,\pd_{20}$, and $\pd_{200}$.}
  \label{fig:smed}
\end{figure}
\section{Discussion and Future Work}
\label{sec.dafw}
In this paper we studied the \sdiv (and its square-root) on positive definite matrices. We derived numerous results that uncovered qualitative similarities between the \sdiv and the Riemannian distance on the manifold of Hermitian positive definite matrices. Notably, we showed that the square root of the \sdiv actually defines a distance, albeit one that does not isometrically embed into any Hilbert space. As an application, we briefly discussed the problems of computing means and medians using the \sdiv, and provided a fixed-point algorithm for computing medians of positive definite matrices.

Several directions of future work are open. We mention some below.
\begin{itemize}
  \setlength{\itemsep}{0pt}
\item Deriving refinements of the main inequalities presented in this paper.
\item Studying properties of the metric space $(\pd^d,\ssd)$
\item Characterizing the subclass $\Xc \subset \pd^d$ of positive matrices for which $(\Xc, \ssd)$ admits an isometric Hilbert space embedding.
\item Developing better algorithms to compute the S-mean and the S-median.
\item Identifying more applications where $\ds^2$ (or $\ssd$) can be useful.
\end{itemize}

We hope that our paper encourages other researchers to investigate new properties and applications of the $S$-Divergence.

\subsection*{Acknowledgments}
I am grateful to Jeff Bilmes for hosting me at the EE Department at the University of Washington, during my unforeseen visit in July 2011. It was there where I first found the proof of Theorem~\ref{thm.metric}. 

\bibliographystyle{siam}

\end{document}